%-----------------------------------------------------------------------
% beginning of article-template.tex
%-----------------------------------------------------------------------
%
%    This is a template file for proceedings articles prepared with AMS
%    author packages, for use with AMS-LaTeX.
%
%    Templates for various common text, math and figure elements are
%    given following the \end{document} line.
%
%%%%%%%%%%%%%%%%%%%%%%%%%%%%%%%%%%%%%%%%%%%%%%%%%%%%%%%%%%%%%%%%%%%%%%%%

%    Remove any commented or uncommented macros you do not use.

%    Replace amsproc by the name of the author package.
\documentclass{conm-p-l}

%    If you need symbols beyond the basic set, uncomment this command.
\usepackage{amssymb}

%    If your article includes graphics, uncomment this command.
%\usepackage{graphicx}

%    If the article includes commutative diagrams, ...
%\usepackage[cmtip,all]{xy}

%    Include other referenced packages here.
\usepackage{}

%    Update the information and uncomment if AMS is not the copyright
%    holder.
%\copyrightinfo{2009}{American Mathematical Society}

\def\bbE{\mathrm{I\!E}}

\def\bbR{\mathrm{I\!R}}
\def\rn{\bbR\nh^n}

\def\dx{\hs\dot{\nh x\hs}\nh}

\def\bp{\mathbf{p}}
\def\dbp{\dot{\hskip0pt\bp\hskip0pt}}
\def\hbp{\hat{\mathbf{p}}}

\def\bx{\mathbf{x}}

\def\by{\mathbf{y}}
\def\dby{\dot{\hskip0pt\by\hskip0pt}}
\def\hby{\hat{\mathbf{y}}}
\def\ddby{\ddot{\hskip0pt\by\hskip0pt}}

\def\df{d\hskip-.8ptf}
\def\cj{c}
\def\p{p}
\def\wp{warp\-ed\hh-\hn prod\-uct}
\def\wp{warp\-ed\hh-\hn prod\-uct}
\def\hg{\hat{g\hskip2pt}\hskip-1.3pt}

\def\hyp{\hskip.5pt\vbox
{\hbox{\vrule width2.5ptheight0.5ptdepth0pt}\vskip2pt}\hskip.5pt}
\def\hs{\hskip.7pt}
\def\hh{\hskip.4pt}
\def\nh{\hskip-.7pt}
\def\nnh{\hskip-1.5pt}
\def\hn{\hskip-.4pt}
\def\w{^{\phantom i}}
\def\bM{\hskip3pt\overline{\hskip-3ptM\nh}\hs}
\def\bna{\hs\overline{\nh\nabla\nh}\hs}
\def\bg{\hskip1.2pt\overline{\hskip-1.2ptg\hskip-.3pt}\hskip.3pt}
\def\bM{\hskip3pt\overline{\hskip-3ptM\nh}\hs}
\def\br{\hskip3pt\overline{\hskip-3ptR\nh}\hs}
\def\vg{\varGamma}
\def\ve{\varepsilon}
\def\h{\eta}
\def\bvg{\hskip3pt\overline{\hskip-3pt\vg\nh}\hs}

\newtheorem{theorem}{Theorem}[section]
\newtheorem{lemma}[theorem]{Lemma}
\newtheorem{corollary}[theorem]{Corollary}

\theoremstyle{definition}

\newtheorem{example}[theorem]{Example}

\theoremstyle{remark}
\newtheorem{remark}[theorem]{Remark}

\numberwithin{equation}{section}

\begin{document}

\title{Max\-i\-mal\-ly-\nh warp\-ed metrics with harmonic curvature}
%\title{}

%    Only \author and \address are required; other information is
%    optional.  Remove any unused author tags.

%    author one information
\author[A. Derdzinski]{Andrzej Derdzinski}
\address{Department of Mathematics\\
The Ohio State University\hskip-1pt\\
231 W\hskip-2pt. 18th Avenue\\
Columbus, OH 43210, USA}
%\curraddr{}
\email{andrzej@math.ohio-state.edu}
\thanks{Both authors' research was supported in part by a 
FAPESP\hn-\hh OSU 2015 Regular Research Award (FAPESP grant: 
2015/50265-6)}

%    author two information
\author[P. Piccione]{Paolo Piccione}
\address{Departamento de Matem\'atica\\
Instituto de Matem\'atica e Estat\'\i stica\\
Universidade de S\~ao Paulo\\
Rua do Mat\~ao 1010, CEP 05508-900\\
S\~ao Paulo, SP, Brazil}
%\curraddr{}
\email{piccione@ime.usp.br}
\thanks{}

%    The 2010 edition of the Mathematics Subject Classification is
%    the current definitive version.
\subjclass[2010]{Primary 53C25; Secondary 53B20}

\date{December 13, 2018}

\begin{abstract}
We describe the local structure of Riemannian manifolds with harmonic 
curvature which admit a maximum number, in a well-defined sense, of local 
warped-\hn product decompositions, and at the same time their Ricci 
tensor has, at some point, only simple eigenvalues. We also prove that, in 
every given dimension greater than two, the local-isometry types of 
such manifolds form a finite\hh-dimensional moduli space, and a 
nonempty open subset of this moduli space is realized by 
locally irreducible complete metrics which are neither Ric\-ci-par\-al\-lel, 
nor -- for dimensions greater than three -- con\-for\-mal\-ly flat.
\end{abstract}

\maketitle

%    Text of article.

\setcounter{section}{0}
\section*{Introduction}%\label{intro}
\setcounter{equation}{0}
A Riemannian manifold is said to have {\it harmonic curvature\/} 
\cite[Sect.~16.33]{besse} if
\begin{equation}\label{dvr}
\mathrm{div}\,R\,=\,0,\hskip14pt\mathrm{or,\ in\ local\ coordinates,\ 
}\,\,R_{i\hn j\hh l}\w{}\hs^k{}\nnh_{,\hs k}\w=0\hh,
\end{equation}
$R\,$ being the curvature tensor. We consider har\-mon\-ic-cur\-va\-ture 
Riemannian manifolds $\,(M\nh,g)\,$ of dimensions $\,n\ge3\,$ in which, with 
$\,\mathrm{r}\,$ denoting the Ric\-ci tensor,
\begin{equation}\label{dia}
\begin{array}{l}
\mathrm{r}\,\mathrm{\ has\ }\hs n\mathrm{\ distinct\ eigen\-values\ at\ 
some\ point,\ and\ an\ open\ sub\-man\-i}\hyp\\
\mathrm{fold\ of\ }\,\hs(M\nh,g)\hh\,\mathrm{\ admits\ a\ nontrivial\ 
warp\-ed}\hyp\mathrm{prod\-uct\ decomposition.}
\end{array}
\end{equation}
Such \wp\ decompositions have one\hs-di\-men\-sion\-al fibres 
(Corollary~\ref{multp}), and are in a one\hs-to\hh-one correspondence with 
certain one\hh-di\-men\-sion\-al Lie sub\-al\-ge\-bras of 
$\,\mathfrak{isom}(M\hn'\nnh,g')$, the Lie algebra of Kil\-ling fields on the 
Riemannian universal covering $\,(M\hn'\nnh,g')\,$ of $\,(M\nh,g)\,$ (see 
Remark~\ref{liesa}). The number $\,\gamma\,$ of these sub\-al\-ge\-bras cannot 
exceed $\,n\hn-\nh1$, cf.\ Corollary~\ref{gleno}, and we refer to $\,g\,$ as 
{\it max\-i\-mal\-ly warp\-ed\/} if
\begin{equation}\label{gno}
\gamma\,\,=\,\,n\hn-\nh1\hh. 
\end{equation}
Our Theorem~\ref{lcstr} describes the local structure of Riemannian manifolds 
$\,(M\nh,g)$ of dimensions $\,n\ge3$, satisfying (\ref{dvr}) -- (\ref{gno}). 
Their lo\-cal-i\-som\-e\-try types turn out to form a 
$\,(2n-3)$-di\-men\-sion\-al moduli space (Remark~\ref{modsp}), and we prove 
(in Theorem~\ref{cpopn}) that some nonempty open 
subset of the moduli space consists of lo\-cal-i\-som\-e\-try types of such 
manifolds which in addition are
\begin{equation}\label{gnr}
\begin{array}{l}
\mathrm{complete,\ locally\ irreducible,\ and\ neither\ 
con}\hyp\\
\mathrm{for\-mal\-ly\ flat\ (unless\ }\hs n=3\mathrm{),\ nor\ 
Ric\-ci}\hyp\mathrm{par\-al\-lel.}
\end{array}
\end{equation}
We do not know if any compact manifold can have the properties (\ref{dvr}) -- 
(\ref{gno}). However, we observe (see Theorem~\ref{glwpp}) that compact 
Riemannian manifolds with harmonic curvature that admit {\it global\/} 
nontrivial warp\-ed-\hn prod\-uct decompositions must have fibre dimensions 
greater than one and, consequently, cannot be {\it Ric\-ci-ge\-ner\-ic\/} in 
the sense of satisfying the dis\-tinct-eigen\-val\-ues clause of (\ref{dia}).

Harmonicity of the curvature always follows if the metric is {\it 
Ric\-ci-par\-al\-lel}, or {\it locally reducible with 
har\-mon\-ic-cur\-va\-ture factors}, or {\it con\-for\-mal\-ly flat and of 
constant scalar curvature\/} while, in dimension three, harmonic curvature 
amounts to con\-for\-mal flat\-ness plus constancy of the scalar curvature 
\cite[Sect.~16.35 and~16.4]{besse}.

Compact Riemannian manifolds with (\ref{dvr}) have been studied extensively. 
All their known examples (aside from the three classes italicized above), 
listed as 2,3,4 in \cite[p.~432]{besse}, admit -- at least locally -- 
nontrivial warp\-ed-\hn prod\-uct decompositions with a fibre of dimension 
greater than one. Consequently (see Corollary~\ref{multp} below), they are not 
Ric\-ci-ge\-ner\-ic. However, for examples 2 and 4 of \cite[p.~432]{besse} 
the warp\-ed-\hn prod\-uct structure, rather than being an Ansatz, is a 
consequence of geometric conditions involving multiplicities of 
eigen\-val\-ues of the Ric\-ci tensor \cite{derdzinski-82} or self-dual Weyl 
tensor \cite{derdzinski-88}. The following questions about compact Riemannian 
manifolds with harmonic curvature, lying outside of the three italicized 
classes, are thus open and natural: can they be Ric\-ci-ge\-ner\-ic? must 
they, locally, have a warp\-ed-\hn prod\-uct decomposition and if not, how to 
describe those among them which have one?

Our Theorem~\ref{cpopn} yields an affirmative answer to a weaker version of 
the first question in which completeness replaces compactness. On the other 
hand, the second question provides an obvious motivation for studying 
condition (\ref{gno}).

The authors wish to thank the referees for constructive comments, which have 
resulted in a substantial improvement of the presentation.

\section{Preliminaries}\label{pr}
\setcounter{equation}{0}
Manifolds (always assumed connected), mappings and tensor fields are by 
definition $\,C^\infty\nnh$-dif\-fer\-en\-ti\-a\-ble. By a {\it Co\-daz\-zi 
tensor\/} \cite[p.~435]{besse} on a Riemannian manifold one means a 
twice\hh-co\-var\-i\-ant symmetric tensor field $\,S\,$ with a totally 
symmetric covariant derivative $\,\nabla\nnh S$. One then has two well-known 
facts \cite[Sect.~16.4(ii)]{besse}:
\begin{equation}\label{cod}
\begin{array}{rl}
\mathrm{i)}&\mathrm{div}\,R=0\,\,\mathrm{\ if\ and\ only\ if\ 
}\,\,\mathrm{r}\,\,\mathrm{\ is\ a\ Co\-daz\-zi\ tensor,}\\
\mathrm{ii)}&\mathrm{the\ condition\ }\,\mathrm{div}\,R=0\hs\mathrm{\ implies\ 
constancy\ of\ }\,\mathrm{s}\hh,
\end{array}
\end{equation}
$\,\mathrm{s}\,$ being the scalar curvature. As shown by DeTurck and 
Goldschmidt \cite{deturck-goldschmidt},
\begin{equation}\label{ana}
\mathrm{metrics\ with\ }\,\mathrm{div}\,R=0\,\mathrm{\ are\ 
real}\hyp\mathrm{analytic\ in\ suitable\ local\ coordinates.}
\end{equation}
We call two (connected) real\hh-an\-a\-lyt\-ic Riemannian manifolds {\it 
locally isometric\/} if they have open sub\-man\-i\-folds that are both 
isometric to open sub\-man\-i\-folds of a third such manifold. One easily sees 
that this is an equivalence relation. In view of the extension theorem for 
analytic isometries \cite[Corollary~6.4 on p.~256]{kobayashi-nomizu}, for two 
complete real\hh-an\-a\-lyt\-ic Riemannian manifolds, being locally isometric 
to each other means the same as having isometric Riemannian universal 
coverings.

On a manifold with a tor\-sion-free connection $\,\nabla\nh$, the Ric\-ci 
tensor $\,\mathrm{r}\,$ satisfies the Boch\-ner identity 
$\,\mathrm{r}(\,\cdot\,,v)+\hs d\hs[\mathrm{div}\hs v]
=\hs\mathrm{div}\hskip1.2pt\nabla\nh v$, where $\,v\,$ is any vector field. 
Its coordinate form $\,R\nh_{jk}\w v^{\hs k}\nh=v^{\hs k}{}_{,\hs jk}\w\nh
-v^{\hs k}{}_{,\hs kj}\w$ arises via contraction from the Ric\-ci identity 
$\,v^{\hs l}{}_{,\hs jk}\w\nh
-v^{\hs l}{}_{,\hs kj}\w\nh=R\nh_{jkq}\w{}^l\hs v\hh^q\nh$. (We use the sign 
convention for $\,R\,$ such that 
$\,R\nh_{jk}\w\nh=R\nh_{jqk}\w{}^q\nh$.) Applied to the gradient $\,v\,$ of a 
function $\,\phi\,$ on a Riemannian manifold, this yields
\begin{equation}\label{rcd}
\mathrm{r}\hs(\nabla\nnh\phi\nh,\,\cdot\,)\,+\hs\,d\hh\Delta\phi\,
=\,\hs\mathrm{div}\hh[\nabla\nh d\phi]\hh.
\end{equation}
The {\it warped product\/} of Riemannian manifolds $\,(\bM\nh,\bg)\,$ and 
$\,(\varSigma,\h)\,$ with the {\it warping function\/} 
$\,\phi:\bM\to(0,\infty)\,$ is the Riemannian manifold
\begin{equation}\label{war}
(M\nh,g)\,=\,(\bM\times\varSigma,\,\bg+\nh\phi^2\h).
\end{equation}
(The same symbols $\,\bg,\h,\phi\,$ stand here for also the pull\-backs of 
$\,\bg,\h,\phi\,$ to the product $\,M\nh=\bM\times\varSigma$.) One calls 
$\,(\bM\nh,\bg)\,$ and $\,(\varSigma,\h)\,$ the {\it base\/} and {\it 
fibre\/} of (\ref{war}), and refers to (\ref{war}) as {\it nontrivial\/} if 
$\,\phi\,$ is nonconstant. From now on we assume that $\,\dim\varSigma\ge1$.
\begin{remark}\label{wrpro}As 
$\,\bg+\nh\phi^2\h=\phi^2\hh[\hs\phi^{-\nh2}\nh\bg+\h\hs]$, a warped product 
is nothing else than a Riemannian manifold con\-for\-mal to a Riemannian 
product via multiplication by a positive function which is constant along one 
of the factor manifolds.
\end{remark}
A proof of the following well-known lemma \cite{kim-cho-hwang} is given in the 
Appendix.
\begin{lemma}\label{warhc}A warped product\/ {\rm(\ref{war})} with a 
nonconstant warping function\/ $\,\phi$ has harmonic curvature if and only 
if the Le\-vi-Ci\-vi\-ta connection\/ $\,\bna\,$ of $\,(\bM\nh,\bg)$, its 
Ric\-ci tensor $\,\overline{\mathrm{r}}\hn$ and the $\,\bg$-grad\-i\-ent 
$\,\bna\nnh\phi\,$ of $\,\phi\,$ satisfy three conditions\hs{\rm:}
\begin{enumerate}
  \def\theenumi{{\rm\alph{enumi}}}
\item $(\varSigma,\h)\,$ is an Ein\-stein manifold, with some Ein\-stein 
constant\/ $\,\kappa$.
\item $\overline{\mathrm{r}}-\hh\p\phi^{-\nnh1}\bna\nh d\phi\,$ is a 
Co\-daz\-zi\ tensor on\/ $\,(\bM\nh,\bg)$, where\/ $\,\p=\dim\varSigma\ge1$. 
\item $\phi^3\hh\overline{\mathrm{div}}\hh[\phi^{-\nnh1}\bna\nh d\phi]
=[(\p-1)\varLambda-\kappa]\,d\phi+(1-\p)\phi\,d\varLambda/2$, with 
$\,\varLambda=\bg(\bna\nnh\phi\hn,\nnh\bna\nnh\phi)$
\end{enumerate}
and the\/ $\bg$-di\-ver\-gence\/ $\hs\overline{\mathrm{div}}\nh$. \nnh Then, 
at each point of\/ $M\nnh$, for the Ric\-ci tensor\/ $\mathrm{r}\nh$ of\/ $g$,
\begin{enumerate}
  \def\theenumi{{\rm\alph{enumi}}}
\item [(d)]the space tangent to the fibre factor is contained in an 
eigen\-space of\/ $\,\mathrm{r}$.
\end{enumerate}
One may rewrite\/ {\rm(c)} as a requirement involving the\/ 
$\,\bg$-La\-plac\-i\-an\/ $\,\hs\overline{\nh\Delta\nh}\hs\phi\hn$, namely,
\begin{enumerate}
  \def\theenumi{{\rm\alph{enumi}}}
\item [(e)] $\phi^2[\overline{\mathrm{r}}(\bna\nnh\phi\nh,\,\cdot\,)
+d\hh\hs\overline{\nh\Delta\nh}\hs\phi]
=[(\p-1)\varLambda-\kappa]\,d\phi+(1-\p/2)\phi\,d\varLambda$.
\end{enumerate}
Finally, when\/ $\,\p=1$, and so\/ $\,\kappa=0$, {\rm(c)} reads\/ 
$\,\overline{\mathrm{div}}\hh[\phi^{-\nnh1}\bna\nh d\phi]=0$.
\end{lemma}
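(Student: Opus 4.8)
The plan is to apply criterion (\ref{cod})(i): the \wp\ {\rm(\ref{war})} with nonconstant $\,\phi\,$ has harmonic curvature if and only if its Ric\-ci tensor $\,\mathrm{r}\,$ is a Co\-daz\-zi tensor, that is, $\,C(A,B,D):=(\nabla_{\!A}\mathrm{r})(B,D)-(\nabla_{\!B}\mathrm{r})(A,D)\,$ vanishes for all vector fields $\,A,B,D$. I would decompose every vector field on $\,M=\bM\times\varSigma\,$ into a component tangent to $\,\bM\,$ (``horizontal'') and one tangent to $\,\varSigma\,$ (``vertical''), and use the classical \wp\ formulas: for horizontal $\,X,Y\,$ and vertical $\,V,W$, the connection $\,\nabla_{\!X}Y\,$ is the horizontal lift of $\,\overline{\nabla}_{\!X}Y$, while $\,\nabla_{\!X}V=\nabla_{\!V}X=(X\phi/\phi)\hh V$, $\,\nabla^{\phantom .}_{\!V}W=\nabla^\varSigma_{\!V}W-\phi\hh\h(V,W)\,\bna\nnh\phi\,$ (the last term lifted to $\,M$), and, with the evident pull\-backs understood, $\,\mathrm{r}(X,Y)=\sigma(X,Y)$, $\,\mathrm{r}(X,V)=0$, $\,\mathrm{r}(V,W)=\mathrm{r}_\varSigma(V,W)-\mu\hh\phi^2\h(V,W)$, where $\,\sigma=\overline{\mathrm{r}}-\hh\p\phi^{-\nnh1}\bna\nh d\phi$, $\,\mathrm{r}_\varSigma\,$ is the Ric\-ci tensor of $\,(\varSigma,\h)$, and $\,\mu=\phi^{-\nnh1}\overline{\nh\Delta\nh}\hs\phi+(\p-1)\phi^{-2}\varLambda\,$ is a function on $\,\bM$.

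The next step is to evaluate $\,C\,$ on every horizontal/vertical pattern of $\,(A,B,D)$. When $\,A,B,D\,$ are all horizontal, $\,\nabla_{\!A}B\,$ stays horizontal and $\,\mathrm{r}\,$ restricted to horizontal vectors is the pull\-back of $\,\sigma$, so the vanishing of $\,C\,$ here is exactly the assertion that $\,\sigma\,$ is a Co\-daz\-zi tensor on $\,(\bM\nh,\bg)$, i.e.\ condition~(b). Any pattern with exactly one vertical argument vanishes identically, by $\,\mathrm{r}(X,V)=0\,$ and $\,\nabla_{\!X}V=(X\phi/\phi)\hh V$; so does $\,C(V,V',X)\,$ for vertical $\,V,V'\,$ and horizontal $\,X$, by the $\,V\!\leftrightarrow\!V'\,$ symmetry of $\,(\nabla_{\!V}\mathrm{r})(V',X)$. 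The decisive pattern is $\,C(X,V,W)\,$ with $\,X\,$ horizontal and $\,V,W\,$ vertical: inserting the connection formulas splits it into a term proportional to $\,\h(V,W)\,$ and one proportional to $\,\mathrm{r}_\varSigma(V,W)$. Since $\,\phi\,$ is nonconstant one may pick $\,X\,$ with $\,X\phi\ne0\,$ at some point of $\,\bM$, and, as $\,\mathrm{r}_\varSigma\,$ and $\,\h\,$ live on $\,\varSigma$, the $\,\mathrm{r}_\varSigma$-term then forces $\,\mathrm{r}_\varSigma=\kappa\h\,$ for a function $\,\kappa\,$ on $\,\varSigma$, after which what remains is a $1$-form identity on $\,\bM$; expanding $\,\mu\,$ in it and substituting $\,\overline{\mathrm{div}}\hh[\bna\nh d\phi]=\overline{\mathrm{r}}(\bna\nnh\phi\nh,\,\cdot\,)+d\hh\hs\overline{\nh\Delta\nh}\hs\phi\,$ (the base instance of (\ref{rcd})) together with $\,(\bna\nh d\phi)(\bna\nnh\phi\nh,\,\cdot\,)=d\varLambda/2\,$ turns that identity into~(e), hence into~(c). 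Finally, $\,C(V,V',W)=0\,$ with all three arguments vertical: once $\,\mathrm{r}_\varSigma=\kappa\h$, the horizontal corrections cancel and it becomes the Co\-daz\-zi equation for $\,\kappa\h\,$ on $\,\varSigma$, which, upon contracting a suitable pair of slots, forces $\,d\kappa=0\,$ -- vacuous when $\,\p=1$, where already $\,\mathrm{r}_\varSigma=0$, and also a consequence of Schur's lemma when $\,\p\ge3\,$ -- so $\,\kappa\,$ is a constant and (a) holds. Conversely, granting (a)--(c), the same identities make $\,C\,$ vanish identically; this settles the equivalence.

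The remaining claims follow immediately. For~(d): by~(a), $\,\mathrm{r}(V,W)=(\kappa\phi^{-2}-\mu)\hh g(V,W)\,$ for vertical $\,V,W$, which together with $\,\mathrm{r}(X,V)=0\,$ shows that at each point the fibre\hh-tangent space is contained in the $\,\mathrm{r}$-eigen\-space for the eigen\-value $\,\kappa\phi^{-2}-\mu$. For the equivalence of~(c) and~(e): expand $\,\phi^3\hh\overline{\mathrm{div}}\hh[\phi^{-\nnh1}\bna\nh d\phi]\,$ by the Leib\-niz rule and apply once more $\,\overline{\mathrm{div}}\hh[\bna\nh d\phi]=\overline{\mathrm{r}}(\bna\nnh\phi\nh,\,\cdot\,)+d\hh\hs\overline{\nh\Delta\nh}\hs\phi\,$ from (\ref{rcd}) and $\,(\bna\nh d\phi)(\bna\nnh\phi\nh,\,\cdot\,)=d\varLambda/2$, which rewrites~(c) as~(e) and back. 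And the last sentence is the case $\,\p=1$: a one\hs-di\-men\-sion\-al fibre has $\,\mathrm{r}_\varSigma=0$, hence $\,\kappa=0$, and the coefficients $\,\p-1\,$ and $\,1-\p\,$ vanish, leaving $\,\overline{\mathrm{div}}\hh[\phi^{-\nnh1}\bna\nh d\phi]=0$.

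The hard part is organizational rather than conceptual: running the \wp\ connection- and curvature-identities without sign or coefficient errors, and, in the horizontal--vertical--vertical pattern, cleanly isolating the $\,\h(V,W)$-proportional part -- which simultaneously yields the Ein\-stein constant $\,\kappa\,$ and the scalar $1$-form equation (c)/(e) -- from the $\,\mathrm{r}_\varSigma$-part. One must also handle the zero set of $\,d\phi\,$ with care: it is enough to derive $\,\mathrm{r}_\varSigma=\kappa\h\,$ at a single point of $\,\bM\,$ where $\,d\phi\ne0$, after which that identity, being a statement about $\,\varSigma\,$ alone, holds everywhere; and the constancy of $\,\kappa\,$ has to be secured in each fibre dimension separately -- trivially for $\,\p=1$, from the all-vertical Co\-daz\-zi equation for $\,\p=2$, and from that equation or Schur's lemma for $\,\p\ge3$.
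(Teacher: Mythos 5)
Your proposal is correct and follows essentially the same route as the paper's Appendix: reduce harmonic curvature to the Codazzi condition for $\,\mathrm{r}\,$ via (\ref{cod}.i), split $\,\nabla\mathrm{r}\,$ by horizontal/vertical index patterns using the standard warped-product connection and Ricci formulas, identify the all-base pattern with (b), the mixed base--fibre--fibre pattern with (a) and (e), and pass between (c) and (e) via (\ref{rcd}); you merely work invariantly where the paper computes in product coordinates. The only (immaterial) divergence is that you obtain constancy of $\,\kappa\,$ from the all-vertical Codazzi equation, whereas the paper gets it at once by observing that $\,\kappa\,$ is simultaneously a function of the fibre variables alone and of the base variables alone.
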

From (d) and, respectively, (c), we obtain two easy consequences:
\begin{corollary}\label{multp}In a warp\-ed-\hn prod\-uct Riemannian manifold 
with a nonconstant warping function, harmonic curvature, and a fibre of 
dimension greater than one, the Ric\-ci tensor has, at every point, at least 
one multiple eigen\-value. The assumptions\/ {\rm(\ref{dvr})} -- 
{\rm(\ref{dia})} thus imply one-di\-men\-sion\-al\-i\-ty of the fibre for any 
\hbox{warp\-ed-}\hskip0ptprod\-uct decomposition in\/ {\rm(\ref{dia})}.
\end{corollary}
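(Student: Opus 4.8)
The plan is to derive both assertions directly from clause~(d) of Lemma~\ref{warhc}, invoking real-analyticity only for the global bookkeeping needed in the second one.

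First I would observe that in a warped product $(\bM\times\varSigma,\,\bg+\phi^2\h)$ with nonconstant warping function and harmonic curvature, conditions (a)--(c) of Lemma~\ref{warhc} hold, so its further conclusion~(d) is available: at every point $x=(\bar x,y)$ the $p$-dimensional subspace tangent to the fibre $\{\bar x\}\times\varSigma$ is contained in an eigenspace $E$ of the Ricci tensor $\mathrm{r}$ at $x$. If $p=\dim\varSigma\ge2$, then $\dim E\ge2$, so the eigenvalue of $\mathrm{r}$ associated with $E$ has multiplicity at least two. Hence $\mathrm{r}$ has a multiple eigenvalue at every point of $M$, which is the first assertion. (The degenerate possibility $E=T_xM$, i.e.\ $\mathrm{r}$ proportional to $g$, still counts, since $n\ge3$.)

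For the second assertion I would argue by contradiction. Suppose $(M,g)$ satisfies (\ref{dvr})--(\ref{dia}) and that some warped-product decomposition occurring in (\ref{dia}), defined on an open submanifold $U\subseteq M$, has fibre of dimension $p\ge2$. Since harmonic curvature and non-constancy of $\phi$ are inherited by $U$, the first part of the corollary yields a repeated eigenvalue of $\mathrm{r}$ at every point of $U$. On the other hand, by (\ref{ana}) the metric is real-analytic in suitable local coordinates, so the discriminant $\delta$ of the characteristic polynomial of $\mathrm{r}$ -- a fixed polynomial expression in the components of $g$ and of $\mathrm{r}$ -- is a real-analytic function on $M$ whose zero set is exactly the locus where $\mathrm{r}$ has a repeated eigenvalue. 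The distinct-eigenvalues clause of (\ref{dia}) forces $\delta\ne0$ at some point; $M$ being connected, $\delta$ cannot then vanish on the nonempty open set $U$, contradicting $\delta\equiv0$ on $U$. Therefore every warped-product decomposition in (\ref{dia}) has a one-dimensional fibre.

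I expect essentially no real obstacle here: the substance is entirely contained in Lemma~\ref{warhc}(d), and the one step deserving a moment's care is the analytic-continuation argument linking the two a priori unrelated clauses of (\ref{dia}) -- that the open set $U$ carrying the warped-product structure is forced to meet the region where the Ricci eigenvalues are simple.
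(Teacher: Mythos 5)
Your argument is correct and follows essentially the same route as the paper, which likewise obtains the first assertion directly from clause~(d) of Lemma~\ref{warhc} (the fibre-tangent space lies in a Ricci eigenspace, so $p\ge2$ forces a multiple eigenvalue). Your analytic-continuation step via the discriminant of the characteristic polynomial of $\mathrm{r}$ is a correct way to make precise the point the paper leaves implicit, namely that by (\ref{ana}) the simple-eigenvalue locus is the complement of the zero set of a real-analytic function and therefore must meet the open submanifold carrying the warped-product decomposition.
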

\begin{theorem}\label{glwpp}
If a warp\-ed-\hn prod\-uct Riemannian manifold\/ $\,(M\nh,g)\,$ has a compact 
base\/ $\,(\bM\nh,\bg)$, a nonconstant warping function, and harmonic 
curvature, then the Ein\-stein constant\/ $\,\kappa$ of its fibre must be 
positive. Thus, the dimension of the fibre is greater than one and, in view of 
Corollary\/~{\rm\ref{multp}}, $\,(M\nh,g)\,$ cannot satisfy the 
dis\-tinct-eigen\-val\-ues clause of\/ {\rm(\ref{dia})}.
\end{theorem}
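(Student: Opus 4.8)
The plan is to argue by contradiction, assuming $\kappa\le 0$, and to derive a contradiction by integrating condition (c) of Lemma~\ref{warhc} (or its equivalent form (e)) over the compact base $\bM$. First I would record what Lemma~\ref{warhc} gives us: $(\varSigma,\h)$ is Einstein with constant $\kappa$, and on $(\bM,\bg)$ the tensor $\overline{\mathrm r}-\p\phi^{-1}\bna d\phi$ is Codazzi, while (c) expresses $\phi^3\,\overline{\mathrm{div}}[\phi^{-1}\bna d\phi]$ in terms of $d\phi$, $d\varLambda$, and $\varLambda=\bg(\bna\phi,\bna\phi)$. Since $\overline{\mathrm{div}}[\phi^{-1}\bna d\phi]=\phi^{-1}\overline{\nh\Delta\nh}\,\phi\ +\ \bna\phi^{-1}\!\cdot\!\bna d\phi\,$, one has $\phi^{-1}\bna d\phi$ with its trace expressible through $\phi$ and $\overline{\nh\Delta\nh}\,\phi$; pairing (e) with $\bna\phi$ and using $d\varLambda=2\,\bna d\phi(\bna\phi,\cdot)$ should produce a second-order elliptic identity for $\phi$ on the closed manifold $\bM$.

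The key step is to extract a differential inequality whose sign is controlled by that of $\kappa$, so that the maximum principle or an integration by parts forces $\phi$ to be constant when $\kappa\le 0$ — contradicting nonconstancy of the warping function. Concretely, I would pair (e) with $\bna\phi$ to get, after using the Bochner-type identity (\ref{rcd}) applied to $\phi$ on $(\bM,\bg)$, namely $\overline{\mathrm r}(\bna\phi,\cdot)+d\,\overline{\nh\Delta\nh}\,\phi=\overline{\mathrm{div}}[\bna d\phi]$, a scalar equation of the form $\phi^2\,\bg(\overline{\mathrm{div}}[\bna d\phi],\bna\phi)=[(\p-1)\varLambda-\kappa]\varLambda+(1-\p/2)\phi\,\bg(d\varLambda,\bna\phi)$. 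I would then integrate $\overline{\nh\Delta\nh}\,\varLambda$ (or an appropriate multiple $\overline{\nh\Delta\nh}(\psi(\phi))$) over $\bM$, whose integral vanishes, and show the remaining integrand is a sum of a manifestly nonnegative term (the square $|\bna d\phi|^2$ or $|\bna d\phi|^2-(\overline{\nh\Delta\nh}\,\phi)^2/\dim\bM\ge 0$ by Cauchy--Schwarz, plus a curvature term handled by the Codazzi condition (b)) and a term with the sign of $-\kappa$. If $\kappa\le 0$ every term is $\ge 0$, so each must vanish identically; in particular $\bna d\phi\equiv$ (pure trace) and $\varLambda$ is constant, whence $|\bna\phi|$ is constant on compact $\bM$, forcing $\bna\phi\equiv 0$ and $\phi$ constant — the desired contradiction. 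Hence $\kappa>0$; then $\dim\varSigma\ge 2$ (a one-dimensional manifold is flat, i.e.\ Einstein only with $\kappa=0$), and Corollary~\ref{multp} gives the final clause.

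The main obstacle I anticipate is organizing the integration by parts so that the Codazzi hypothesis (b) and the scalar-curvature constancy (\ref{cod}ii) are used to absorb or sign the curvature contributions of $(\bM,\bg)$ — the naive Bochner computation produces a $\overline{\mathrm r}(\bna\phi,\bna\phi)$ term that is not a priori signed, and the trick is to rewrite it via (b) as $\p\phi^{-1}\bna d\phi(\bna\phi,\bna\phi)$ plus a Codazzi remainder whose integral against $\bna\phi$ can be controlled (a Codazzi tensor $S$ satisfies $\overline{\mathrm{div}}\,S=\tfrac12 d(\mathrm{tr}\,S)$, and $\mathrm{tr}$ of $\overline{\mathrm r}-\p\phi^{-1}\bna d\phi$ is $\overline{\mathrm s}-\p\phi^{-1}\overline{\nh\Delta\nh}\,\phi$, with $\overline{\mathrm s}$ related to the constant $\mathrm s$ of $(M,g)$). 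Getting the bookkeeping of these trace terms right — so that everything collapses to a nonnegative integrand plus a $\kappa$-term — is where the real work lies; the rest is the standard "vanishing on a closed manifold" argument.
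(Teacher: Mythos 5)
Your plan is essentially the paper's own argument: the paper also multiplies condition (c) (in the form $w=0$, with $w=\phi^2\hs\overline{\mathrm{div}}\,\bna\nh d\phi-[(\p-1)\varLambda-\kappa]\hs\bna\nnh\phi+(p-2)\hh\phi\hs\bna\hskip-2.7pt_{\bna\nnh\phi}\w\bna\nnh\phi$) by $\phi\hs^{p-4}\bg(\bna\nnh\phi,\,\cdot\,)$, observes that the result differs by a $\bg$-di\-ver\-gence from $-[(\p-1)\varLambda-\kappa]\phi\hs^{p-4}\varLambda-\phi\hs^{p-2}|\bna\nh d\phi|^2$, and integrates over the compact base, so that $\kappa\le0$ forces both nonnegative terms to vanish and $\phi$ to be constant. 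Your anticipated obstacle is illusory: the weight $\phi\hs^{p-4}$ makes the $\bg(d\varLambda,\bna\nnh\phi)$ cross-terms cancel exactly, no $\overline{\mathrm{r}}(\bna\nnh\phi,\bna\nnh\phi)$ term survives once (\ref{rcd}) is applied, and the Codazzi condition (b) is not needed at all.
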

In fact, given a positive function $\,\phi\,$ on a Riemannian manifold 
$\,(\bM\nh,\bg)\,$ and constants $\,\kappa,p\in\bbR$, let us set 
$\,v=\bna\nnh\phi\,$ and $\,w=\phi^2u-[(\p-1)\varLambda-\kappa]\hs v
+(p-2)\hh\phi\hs\bna\hskip-2.7pt_v\w v$, where 
$\,u=\hs\overline{\mathrm{div}}\nh\,\bna\nh v\,$ and 
$\,\varLambda=\bg(v,v)$. Then the function 
$\,\phi\hs^{p-\hn4}\bg(v,w)\,$ differs by a $\bg$-di\-ver\-gence from 
$\,-[(\p-1)\varLambda-\kappa]\phi\hs^{p-\hn4}\nh\varLambda
-\phi\hs^{p-\nh2}\bg(\bna\nh v,\bna\nh v)$, while (c) reads $\,w=0$, as 
$\,2\bna\hskip-2.7pt_v\w v=\bna\nnh\varLambda$. (An easy exercise.)
\begin{remark}\label{ricog}The base and fibre factor distributions of any 
warped product are Ric\-ci-or\-thog\-onal to each other. (See the equality 
$\,R_{ia}\w\nh=0\,$ in formula (\ref{ria}) of the Appendix.) Thus, if the 
base, or fibre, is one\hh-di\-men\-sion\-al, nonzero vectors tangent to it 
constitute eigen\-vectors of the Ric\-ci tensor.
\end{remark}

\section{Vector fields}\label{vf}
\setcounter{equation}{0}
\begin{lemma}\label{cplic}Let a maximal integral curve\/ 
$\,(a_-\w,a_+\w)\ni t\mapsto x(t)\,$ of a vector field\/ $\,v\hs$ on a 
manifold\/ $\,M\nh$, with\/ $\,-\infty\le a_-\w\nnh<a_+\w\nh\le\infty$, some\/ 
$\,t'\nh\in(a_-\w,a_+\w)$, and some compact set\/ $\,C\hh\subseteq M\nh$, 
have the property that\/ $\,x(t)\in C\hs$ for all\/ 
$\,t\in[\hs t'\nh,a_+\w)\,$ or, respectively, for all\/ $\,t\in(a_-\w,t']$. 
Then\/ $\,a_+\w=\infty\,$ or, respectively, $\,a_-\w=-\infty$.

Consequently, a maximal integral curve of a vector field on a manifold, lying 
within a compact set, must be complete, that is, defined on\/ $\,\bbR$.
\end{lemma}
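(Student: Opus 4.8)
The plan is to prove the main (first) assertion, namely that an integral curve which stays in a compact set on a forward (resp.\ backward) half-line of its maximal domain must in fact be forward-complete (resp.\ backward-complete); the stated ``Consequently'' is then immediate by applying the first part in both time directions.

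\medskip

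First I would set up the standard escape-time picture. Suppose $\,a_+\w<\infty\,$ while $\,x(t)\in C\,$ for all $\,t\in[\hs t'\nh,a_+\w)$. Because $\,C\,$ is compact, cover it by finitely many coordinate charts on which the flow-box theorem gives a uniform bound: there are $\,\delta>0\,$ and a neighborhood $\,U\,$ of $\,C\,$ such that through every point of $\,C\,$ the maximal integral curve of $\,v\,$ is defined at least on an interval of length $\,\delta\,$ and remains in $\,U$. (Equivalently, one invokes the continuity/openness of the domain of the local flow $\,(t,q)\mapsto\Phi_t(q)\,$ together with compactness of $\,C\,$ to extract a uniform lower bound $\,\delta\,$ on the existence time.) Now pick any $\,t_0\in(a_+\w-\delta,\,a_+\w)\,$ with $\,t_0\ge t'$; then $\,x(t_0)\in C$, so the integral curve through $\,x(t_0)\,$ extends for time $\,\delta\,$ beyond $\,t_0$, hence the maximal integral curve $\,x\,$ extends past $\,t_0+\delta>a_+\w$, contradicting maximality. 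Therefore $\,a_+\w=\infty$. The backward case is verbatim the same with $\,t\mapsto-t$, or one simply applies the forward result to the vector field $\,-v$.

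\medskip

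For the corollary: let $\,t\mapsto x(t)\,$ be a maximal integral curve of $\,v\,$ on $\,(a_-\w,a_+\w)\,$ whose image lies in a compact set $\,C$. Fix any $\,t'\nh\in(a_-\w,a_+\w)$. Then trivially $\,x(t)\in C\,$ for all $\,t\in[\hs t'\nh,a_+\w)\,$ and also for all $\,t\in(a_-\w,t']$, so the first part gives both $\,a_+\w=\infty\,$ and $\,a_-\w=-\infty$; the curve is defined on all of $\,\bbR$.

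\medskip

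The only real point requiring care is the uniform existence time $\,\delta$: one must genuinely use compactness of $\,C\,$ (not merely of the image of $\,x$, which need not be compact a priori) to pass from the merely pointwise, non-uniform existence intervals supplied by the Pi\-card–Lin\-de\-l\"of theorem to a single positive lower bound valid throughout $\,C$. Everything else is the classical ``a solution that does not leave a compact set cannot blow up in finite time'' argument, and no differentiability beyond $\,C^1\,$ of $\,v\,$ is needed.
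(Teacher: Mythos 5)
Your proof is correct, and it takes a genuinely different route from the paper's. You run the classical escape-time argument: compactness of $C$, via openness of the domain of the local flow, yields a uniform lower bound $\delta>0$ on the existence time of integral curves starting anywhere in $C$, and restarting the curve at some $t_0\in(a_+-\delta,\,a_+)\cap[\hs t'\nh,a_+)$ then pushes the solution past $a_+$, contradicting the definition of $a_+$ as the supremum of the maximal domain. The paper instead multiplies $v$ by a compactly supported bump function $\chi$ equal to $1$ on an open set $U\supseteq C$: the field $\chi v$ is complete because its support is compact, while the restriction of the given curve to $[\hs t'\nh,a_+)$ remains a half-maximal integral curve of $\chi v$ (any extension as a $\chi v$-curve would, for a short time past $a_+$, stay in $U$, where $\chi v=v$, contradicting maximality for $v$), which forces $a_+=\infty$. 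Your route is more self-contained, relying only on local existence and uniqueness plus compactness, and you correctly flag the uniform existence time as the one point needing care; the paper's route is shorter because it black-boxes the standard fact that compactly supported vector fields are complete. Your handling of the backward case and of the ``consequently'' clause is fine.
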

\begin{proof}For a compactly supported function $\,\chi\,$ equal to $\,1\,$ on 
an open set $\,\,U$ containing $\,C\hn$, the curve restricted to 
$\,[\hs t'\nh,a_+\w)$, or to $\,(a_-\w,t']$, clearly remains half-max\-i\-mal 
(not extendible beyond $\,a_+\w$, or $\,a_-\w$) when treated as an integral 
curve of $\,\chi v$. On the other hand, $\,\chi v\,$ is complete due to 
compactness of its support.
\end{proof}
By a {\it section\/} of a locally trivial fibre bundle we mean, as usual, a 
sub\-man\-i\-fold $\,\varSigma\,$ of the total space $\,Q\,$ mapped 
dif\-feo\-mor\-phic\-al\-ly onto the base $\,M\,$ by the bundle projection
$\,\mathrm{p}$. We also identify the section with the inverse 
$\,\psi:M\to\varSigma\,$ of the latter dif\-feo\-mor\-phism, which makes it a 
mapping $\,\psi:M\to Q\,$ having 
$\,\mathrm{p}\circ\nh\psi=\hs\mathrm{Id}_M\w$. In the case of a vector bundle 
$\,Q$, a section $\,\psi$, and a zero $\,z\in M\,$ of $\,\psi$, the 
corresponding sub\-man\-i\-fold $\,\varSigma\,$ of $\,Q\,$ intersects the zero 
section $\,M\,$ at $\,z\,$ (that is, at the zero vector of the fibre 
$\,Q\nh_z\w$), giving rise to the {\it differential\/} 
$\,\partial\hh\psi\nnh_z\w$, defined to be the linear operator 
$\,T\hskip-2.7pt_z\w\nh M\to Q\nh_z\w$ obtained as the composite of the 
ordinary differential of $\,\psi:M\to Q\,$ at $\,z\,$ (the inverse of 
$\,d\mathrm{p}\nh_z\w:T\hskip-2.7pt_z\w\nh\varSigma
\to T\hskip-2.7pt_z\w\nh M$), followed by the di\-rect-sum projection 
$\,T\hskip-2.7pt_z\w\nh Q=T\hskip-2.7pt_z\w\nh M\oplus Q\nh_z\w\to Q\nh_z\w$. 
Relative to any local coordinates at $\,z\,$ and a local trivialization of 
$\,Q$, the components of $\,A=\partial\hh\psi\nnh_z\w$ form the matrix 
$\,[A_j^\lambda]=[\partial\nh_j\w\psi^\lambda]$, with the partial derivatives 
of the components of $\,\psi\,$ evaluated at $\,z$.

Two important examples are provided by zeros $\,z\,$ of $\,\psi=v$, a vector 
field on $\,M\,$ (with $\,Q=T\nnh M$) and of $\,\psi=\df\nnh$, for a function 
$\,f:M\to\bbR\,$ (here $\,Q=T^*\hskip-2ptM$). In the former case, 
$\,A=\partial v\nh_z\w$ (in coordinates: $\,A_j^k=\partial_jv^k$), is the 
infinitesimal generator of the one-pa\-ram\-e\-ter group of linear 
transformations of $\,T\hskip-2.7pt_z\w\nh M\,$ arising as the differentials, 
at the fixed point $\,z$, of the local diffeomorphisms forming the local flow 
of $\,v$. In the latter, $\,\partial\df\nnh_z\w=\hs\mathrm{Hess}_z\w f\nnh$, 
the Hess\-i\-an of $\,f\,$ at the critical point $\,z$.

Let $\,v\,$ be a vector field on a manifold $\,M\nh$, having a zero at 
$\,z\in M\nh$, where one assumes $\,M\,$ either to be an open 
sub\-man\-i\-fold of a vector space $\,Y\nnh$, or to have a sub\-man\-i\-fold 
$\,N\hs$ with $\,z\in N\hs$ such that $\,v\,$ is tangent to $\,N\hs$ at each 
point of $\,N\nh$. In this way $\,v$, or the restriction of $\,v\,$ to 
$\,N\nnh$, becomes a mapping $\,v:M\hn\to\hh Y\nnh$, or a vector field $\,w\,$ 
on $\,N\nnh$. The equality $\,A_j^k=\partial_jv^k$ of the last paragraph, 
evaluated in coordinates for $\,M\,$ which are linear functionals on 
$\,Y\nh$ or, respectively, in which $\,N$ is defined by equating some 
coordinate functions to $\,0$, clearly implies that
\begin{equation}\label{pvz}
\mathrm{i)}\hskip7pt
\partial v\nh_z\w=\hh dv\nh_z\w:Y\nnh\to\hh Y\nh,\hskip13pt
\mathrm{ii)}\hskip7pt
\partial w\nh_z\w\mathrm{\ equals\ the\ restriction\ of\ 
}\,\partial v\nh_z\w\mathrm{\ to\ }\,T\hskip-2.7pt_z\w\nh N\nh.
\end{equation}
\begin{lemma}\label{hsdvf}
Given a zero\/ $\,z\in M\hs$ of a vector field\/ $\,v\hs$ on a manifold\/ 
$\,M\nnh$, with the differential\/ $\,A=\partial v\nh_z\w$, let a function\/ 
$\,f:U\to\bbR\,$ on a neighborhood\/ $\,\,U$ of\/ $\,z$ have\/ 
$\,\df\nnh_z\w=0$. Then\/ $\,d\sigma\nnh_z\w=0\,$ and\/ 
$\,(u,u)\nh_\sigma\w=2(Au,u)\nnh_f\w$ for the directional derivative\/ 
$\,\sigma=d_v\w f:U\to\bbR$, all\/ $\,u\in T\hskip-2.7pt_z\w\nh M\nnh$, the 
Hess\-i\-an\/ $\,(\hskip3pt,\hs)\nnh_f\w=\mathrm{Hess}_z\w f\nnh$, and\/ 
$\,(\hskip3pt,\hs)\nh_\sigma\w=\mathrm{Hess}_z\w\sigma$.
\end{lemma}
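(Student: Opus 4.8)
The plan is to compute everything in local coordinates adapted to the zero $\,z\,$ of $\,v$, using the coordinate form of the differential $\,A\,$ recorded earlier, namely $\,A_j^k=\partial_jv^k\,$ at $\,z$. Write $\,v=v^k\partial_k\,$ and $\,\sigma=d_vf=v^k\partial_kf$. First I would verify that $\,d\sigma_z=0$: differentiating $\,\sigma=v^k\partial_kf\,$ gives $\,\partial_j\sigma=(\partial_jv^k)(\partial_kf)+v^k\partial_j\partial_kf$, and at $\,z\,$ the first term vanishes because $\,df_z=0\,$ (so all $\,\partial_kf=0\,$ at $\,z$) and the second term vanishes because $\,v\,$ vanishes at $\,z\,$ (so all $\,v^k=0\,$ at $\,z$). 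Hence $\,\partial_j\sigma=0\,$ at $\,z$, i.e.\ $\,d\sigma_z=0$, which also legitimizes speaking of the Hessian $\,\mathrm{Hess}_z\,\sigma\,$ as a well-defined bilinear form (independent of the connection) at the critical point $\,z$.

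Next I would compute the Hessian of $\,\sigma\,$ at $\,z$. Since $\,z\,$ is a critical point of $\,\sigma$, the Hessian is just the symmetric matrix of second partials $\,\partial_i\partial_j\sigma\,$ evaluated at $\,z$. Differentiating $\,\partial_j\sigma=(\partial_jv^k)(\partial_kf)+v^k\partial_j\partial_kf\,$ once more in the $\,i$-direction produces four terms:
\[
\partial_i\partial_j\sigma=(\partial_i\partial_jv^k)(\partial_kf)+(\partial_jv^k)(\partial_i\partial_kf)+(\partial_iv^k)(\partial_j\partial_kf)+v^k\partial_i\partial_j\partial_kf.
\]
At $\,z$, the first term dies because $\,\partial_kf=0\,$ there, and the last term dies because $\,v^k=0\,$ there. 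What survives is $\,(\partial_jv^k)(\partial_i\partial_kf)+(\partial_iv^k)(\partial_j\partial_kf)$, evaluated at $\,z$. Recognizing $\,\partial_jv^k=A_j^k\,$ and $\,\partial_i\partial_kf=(\mathrm{Hess}_zf)_{ik}=:(\,\cdot\,,\,\cdot\,)_f$ (again a genuine bilinear form since $\,df_z=0$), this reads $\,(Au,u')_f+(Au',u)_f\,$ on contracting with tangent vectors $\,u,u'$; specializing to $\,u'=u\,$ gives $\,(u,u)_\sigma=2(Au,u)_f$, which is the assertion.

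The only point requiring a word of care, rather than an obstacle, is the coordinate-independence of the two Hessians and of the formula: $\,\mathrm{Hess}_zf\,$ and $\,\mathrm{Hess}_z\sigma\,$ are intrinsically defined precisely because $\,df_z=0\,$ and (as just shown) $\,d\sigma_z=0$, and the right-hand side $\,2(Au,u)_f\,$ is manifestly well defined once one knows $\,A=\partial v_z\,$ is the intrinsic differential of $\,v\,$ at its zero, as discussed before the lemma. So the computation above, carried out in any one coordinate system, yields a chart-independent identity. I expect no genuine difficulty here; the statement is essentially a bookkeeping consequence of the product rule applied twice, with the two ``obvious'' terms killed by the hypotheses $\,v(z)=0\,$ and $\,df_z=0$.
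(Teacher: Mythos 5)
Your proposal is correct and follows essentially the same route as the paper: the paper's proof is exactly this two-fold application of the product rule to $\sigma=v^{\hs j}f_{,j}$ in local coordinates, with the terms containing $v^{\hs j}$ and $f_{,j}$ killed at $z$ and $v^{\hs j}{}_{,k}=A^j_k$ identified there. Your added remarks on coordinate-independence of the Hessians at a critical point are a harmless elaboration of what the paper leaves implicit.
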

\begin{proof}With commas denoting, this time, partial derivatives relative to 
fixed local coordinates on a neighborhood of $\,z$, we have 
$\,\sigma=v\hh^j\nnh f\nnh_{,j}\w$ as well as 
$\,\sigma\nnh_{,\hs k}\w=v\hh^j\nnh f\nnh_{,jk}
+v\hh^j{}\nh_{,k}\w f\nnh_{,j}\w$ and $\,\sigma\nnh_{,\hs kl}\w
=v\hh^j\nnh f\nnh_{,jkl}\w+v\hh^j{}\nh_{,\hh l}\w f\nnh_{,jk}\w
+v\hh^j{}\nh_{,\hs k}\w f\nnh_{,jl}\w+v\hh^j{}\nh_{,\hs kl}\w f\nnh_{,j}\w$. 
At $\,z$, both $\,v\hh^j$ and $\,f\nnh_{,j}\w$ vanish, while 
$\,v\hh^j{}\nh_{,\hs k}\w=A^j_k$. This proves our claim.
\end{proof}
\begin{lemma}\label{trapd}
Let\/ $\,z\in N$ be a zero of a vector field\/ $\,w\hs$ on a manifold\/ 
$\,N$ such that, for some\/ $\,\ve=\pm1$, some Euclidean inner product 
$\,\langle\,,\rangle$ in\/ $\,T\hskip-2.7pt_z\w\nh N\nnh$, and\/ 
$\,A=\ve\partial w\nh_z\w$, the bi\-lin\-e\-ar form\/ 
$\,\langle A\hs\cdot\,,\,\cdot\,\rangle\,$ on\/ 
$\,T\hskip-2.7pt_z\w\nh N\,$ is negative definite. In this case there exist 
arbitrarily small neighborhoods\/ $\,\,U\,$ of\/ $\,z\,$ with the following 
property\/{\rm:} if a maximal integral curve\/ 
$\,(a_-\w,a_+\w)\ni t\mapsto x(t)\,$ of\/ $\,w\hs$ and\/ 
$\,t'\nh\in(a_-\w,a_+\w)\,$ satisfy the condition\/ $\,x(t')\in\,U\nh$, 
where\/ $\,-\infty\le a_-\w\nh<a_+\w\hn\le\infty$, then, denoting by\/ 
$\,\pm\,$ the sign of\/ $\,\ve$, one has\/ $\,a_\pm\w=\pm\infty$, and\/ 
$\,x(t)\in\,U\,$ whenever\/ $\,\ve\hh(t\hn-t')\ge0$.
\end{lemma}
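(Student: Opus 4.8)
The plan is to treat this as a standard Lyapunov-stability argument, adapted so that it runs in forward time when $\ve=+1$ and in backward time when $\ve=-1$. First I would reduce the problem to a model computation near $z$. By the discussion preceding the lemma, in suitable local coordinates centered at $z$ (identifying a neighborhood of $z$ in $N$ with an open set in $T\hskip-2.7pt_z\w\nh N$), the vector field $w$ becomes a map $w\colon U\to T\hskip-2.7pt_z\w\nh N$ with $w(z)=0$ and $dw\nh_z\w=\partial w\nh_z\w=\ve A$, using (\ref{pvz}). The Euclidean inner product $\langle\,,\rangle$ gives a norm $|\cdot|$, and the hypothesis says $\langle A\hs\cdot\,,\,\cdot\,\rangle$ is negative definite, so there is $c>0$ with $\langle Au,u\rangle\le-c\hh|u|^2$ for all $u\in T\hskip-2.7pt_z\w\nh N$.

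Next I would introduce the Lyapunov function $\rho(x)=|x|^2$ and compute its derivative along an integral curve $x(t)$ of $w$. Writing $w(x)=\ve Ax+R(x)$ where the remainder satisfies $|R(x)|\le\epsilon\hh|x|$ on a small enough ball (by differentiability, $R(x)=o(|x|)$ as $x\to z$), one gets
$$\frac{d}{dt}\rho(x(t))=2\langle x,\dot x\rangle=2\ve\langle Ax,x\rangle+2\langle x,R(x)\rangle\le 2\ve(-c+\epsilon)\hh|x|^2.$$
Fix $\epsilon=c/2$ and let $r>0$ be small enough that the estimate $|R(x)|\le\epsilon|x|$ holds on the closed ball $\overline B(z,r)\subseteq U$. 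Then, as long as $x(t)$ stays in that ball, $\ve\hh\frac{d}{dt}\rho(x(t))\le-c\hh\rho(x(t))$, which by Gron\-wall gives exponential decay of $\rho$ in the direction of increasing $\ve t$: if $\ve(t-t')\ge0$ then $\rho(x(t))\le e^{-c\ve(t-t')}\rho(x(t'))\le\rho(x(t'))$. Consequently, any open ball $U=B(z,r_0)$ with $r_0\le r$ is \emph{positively invariant in the $\ve$-direction}: once $x(t')\in U$, the trajectory cannot leave $U$ for $\ve(t-t')\ge0$, since $|x(t)|$ is non-increasing along that direction and strictly less than $r_0$ at $t'$. This is exactly the asserted conclusion $x(t)\in U$ whenever $\ve(t-t')\ge0$, and these $U$ can be taken arbitrarily small by shrinking $r_0$.

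Finally, to get $a_\pm\w=\pm\infty$, I would invoke Lemma~\ref{cplic}. Having shown the half-trajectory $\{x(t):\ve(t-t')\ge0\}$ stays inside the compact set $C=\overline B(z,r_0)\subseteq N\subseteq M$, Lemma~\ref{cplic} (applied with the appropriate sign: the ``$[\hs t'\nh,a_+\w)$'' case if $\ve=+1$, the ``$(a_-\w,t']$'' case if $\ve=-1$) yields $a_+\w=\infty$, respectively $a_-\w=-\infty$; that is, $a_\pm\w=\pm\infty$. One small technical point to handle carefully is that the local-coordinate identification is only defined on a neighborhood of $z$, so the balls $B(z,r_0)$ make intrinsic sense in $N$; since $w$ is tangent to $N$, the maximal integral curves of $w$ through points of $N$ stay in $N$, so no confusion between integral curves in $M$ and in $N$ arises. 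The main obstacle is not conceptual but bookkeeping: making sure the remainder estimate, the choice of $r$, and the sign conventions tying $\ve$ to the direction of monotonicity are all consistent, and that ``arbitrarily small $U$'' is genuinely delivered (it is, since $r_0$ ranges over $(0,r]$ and $r$ itself can be shrunk). The Gron\-wall step and Lemma~\ref{cplic} do all the real work.
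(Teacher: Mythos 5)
Your proof is correct and follows essentially the same route as the paper: a Lyapunov function equal to the squared distance from $z$, strictly monotone along trajectories in the $\ve$-direction near $z$ (the paper obtains this via Lemma~\ref{hsdvf} applied to $f$ with $2f=\mathrm{dist}^2(z,\,\cdot\,)$ for a metric extending $\langle\,,\rangle$, you via a direct Taylor expansion of $w$ in linear coordinates plus Gronwall), followed by Lemma~\ref{cplic}. The only blemish is your displayed inequality, whose direction is reversed when $\ve=-1$ because the factor $\ve$ sits on the right-hand side; the estimate you actually use afterwards, $\ve\,\frac{d}{dt}\rho(x(t))\le-c\,\rho(x(t))$, is the correct one.
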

\begin{proof}We fix a Riemannian metric $\,g\,$ on a neighborhood of $\,z\,$ 
in $\,N\hs$ having $\,\langle\,,\rangle=g\nh_z\w$. The required neighborhoods 
$\,\,U\,$ of $\,z\,$ are $\,g$-met\-ric balls centered at $\,z$, small enough 
so as to have compact closures and be dif\-feo\-mor\-phic images, under the 
$\,g$-ex\-po\-nen\-tial mapping at $\,z$, of the corresponding Euclidean balls 
around $\,0$ in $\,T\hskip-2.7pt_z\w\nh N\nnh$. This gives smoothness of 
the function $\,f:U\to\bbR\,$ such that $\,2f\,$ equals 
$\,\mathrm{dist}\hn^2(z,\,\cdot\,)$, the squared $\,g$-dis\-tance from $\,z$, 
and using normal coordinates one obtains 
$\,\mathrm{Hess}_z\w f\nh=\langle\,,\rangle$. If the $\,g$-met\-ric ball 
$\,\,U\,$ is sufficiently small, Lemma~\ref{hsdvf} for $\,v=\ve w\,$ implies 
negativity of $\,\sigma=\ve d_w\w f\,$ on $\,\,U\nh\smallsetminus\{z\}$, as 
$\,\sigma\,$ assumes at $\,z\,$ the critical value $\,0\,$ with a 
neg\-a\-tive-def\-i\-nite Hess\-i\-an. Our claim now easily follows from 
Lemma~\ref{cplic}.
\end{proof}
\begin{remark}\label{mltpl}{\it The same\/} neighborhoods $\,\,U\,$ of $\,z\,$ 
will still satisfy the assertion of Lemma~\ref{trapd} if one replaces $\,w\,$ 
by $\,w/c\,$ for a constant $\,c>0\,$ and 
$\,(a_-\w,a_+\w)\ni t\mapsto x(t)$ by 
$\,(ca_-\w,ca_+\w)\ni t\mapsto x(t/c)$.
\end{remark}
\begin{remark}\label{extkf}For any Kil\-ling field $\,v\,$ on a Riemannian 
manifold $\,(M\nh,g)$, the pair $\,(v,\nh\nabla\nh v)\,$ constitutes a 
parallel section of the vector bundle 
$\,T\nnh M\oplus\,\mathfrak{so}(T\nnh M)$ endowed with a suitable linear 
connection \cite[Remark 17.25 on p.\ 547]{dillen-verstraelen}. Therefore,
\begin{enumerate}
  \def\theenumi{{\rm\roman{enumi}}}
\item a Kil\-ling field on $\,M\,$ is uniquely determined by its restriction 
to any nonempty open subset of $\,M\nh$, while
\item assuming $\,(M\nh,g)\,$ to be simply connected and 
real\hh-an\-a\-lyt\-ic, we conclude that any Kil\-ling field $\,v\,$ on a 
nonempty connected open subset of $\,M\,$ has a unique extension to a 
Kil\-ling field on $\,M\nh$.
\end{enumerate}
Given a nontrivial Kil\-ling vector field $\,v\,$ on a Riemannian manifold and 
a function $\,\theta\nh$, the obvious equality 
$\,\pounds\hskip-1pt_{\theta\nh v}\w g\hs\,
=\,\hs\theta\nh\pounds\hskip-1pt_v\w g\hs
+2\hs d\theta\odot g(v,\,\cdot\,)\,$ clearly implies that
\begin{equation}\label{kil}
\mathrm{if \ }\,\,\hs\theta v\,\,\mathrm{\ is\ also\ a\ Kil\-ling\ field,\ 
}\,\,\theta\,\,\mathrm{\ must\ be\ constant, \ cf.\ Remark~\ref{extkf}(i).}
\end{equation}
\end{remark}

\section{Integrable-complement Kil\-ling fields}\label{ic}
\setcounter{equation}{0}
This section presents a well-known correspondence -- see, for instance, the 
Appendix in \cite{maschler} -- between \wp\ decompositions with a 
one\hh-di\-men\-sion\-al fibre and certain special Kil\-ling fields.

Let $\,v\,$ a nontrivial Kil\-ling field on a Riemannian manifold 
$\,(M\nh,g)\,$ such that, on the dense (by Remark~\ref{extkf}(i)) complement 
of its zero set, the distribution $\,v^\perp$ is integrable. In other words, 
locally, at points with $\,v\ne0$, multiplying $\,v\,$ by a suitable positive 
function one obtains a gradient vector field. Equivalently,
\begin{equation}\label{gvg}
\mathrm{the\ }\,1\hyp\mathrm{form\ }\,g(v,\,\cdot\,)\hs/g(v,v)\mathrm{,\ 
defined\ wherever\ }\,v\ne0\mathrm{,\ is\ closed.}
\end{equation}
Namely, (\ref{gvg}) is necessary: for $\,\xi=\,g(v,\,\cdot\,)$, due to 
skew-sym\-me\-try of $\,\nabla\nh\xi$, the in\-te\-gra\-bil\-i\-ty condition 
$\,\xi\wedge\hs d\hh\xi=0\,$ has the lo\-cal-co\-or\-di\-nate expression 
$\,\xi\hh_{i,j}\w\xi_k\w+\xi\nh_{j,k}\w\xi\hh_i\w+\xi_{k,i}\w\xi_j\w=0$, which 
transvected with $\,v\hh^k$ yields 
$\,v\hh^k\xi_k\w\xi\hh_{i,j}\w=
v\hh^k\xi_{k,j}\w\xi\hh_i\w-v\hh^k\xi_{k,i}\w\xi\nh_j\w$, or
\begin{equation}\label{tps}
2\beta\hh\xi\hh_{i,j}\w\hs=\,\beta\nnh_{,\hh j}\w\xi\hh_i\w\hs
-\,\beta\nnh_{,i}\w\xi\nh_j\w\hh,\hskip12pt\mathrm{where\ }\,\beta
=v\hh^k\xi_k\w=g(v,v).
\end{equation}
Closedness of $\,\xi/\nh\beta\,$ amounts to symmetry of 
$\,\nabla(\xi/\nh\beta)$, and so it now follows since (\ref{tps}) with 
$\,\xi\hh_{i,j}\w\nh=-\hh\xi\nh_{j,i}\w$ implies symmetry of 
$\,\beta^2(\xi\hh_i\w/\nh\beta)\nnh_{,\hh j}\w=\beta\hh\xi\hh_{i,j}\w
-\beta\nnh_{,\hh j}\w\xi\hh_i\w$ in $\,i,j$.

If $\,v\,$ is a Kil\-ling field, $\,g(v,\dx)\,$ is constant along any 
geodesic $\,t\mapsto x=x(t)$, as 
$\,d\hs[g(v,\dx)]/dt
=g(\nabla\hskip-2.7pt_{\hh\dot{\hn x\hh}\hn}\w v,\dx)=0$. Then, with 
the orthogonal complement $\,v^\perp$ only defined away from the zero set of 
$\,v$, one easily sees that
\begin{equation}\label{tgl}
\begin{array}{l}
v\,\,\mathrm{\ is\ orthogonal\ to\ any\ geodesic\ passing\ through\ a\ zero\ 
of\ }\,\,v\mathrm{,\ while}\\
\mathrm{whenever\ (\ref{gvg})\ holds,the\ distribution\ 
}\nh\,v^\perp\nnh\mathrm{\ has\ totally\ geodesic\ leaves.}
\end{array}
\end{equation}
\begin{remark}\label{intco}Local Kil\-ling fields $\,v\,$ satisfying 
(\ref{gvg}), outside of their zero sets, if treated as defined only up to 
multiplication by nonzero constants, stand in a natural one\hh-to\hh-one 
correspondence with local \wp\ decompositions of $\,g$ that have a 
one\hh-di\-men\-sion\-al fibre. Here $\,v\,$ is tangent to the fibre direction.

Namely, such a local decomposition is uniquely determined by the base and 
fibre factor distributions. Just one of them suffices, the other being its 
(necessarily integrable) orthogonal complement. That $\,v\,$ locally spans the 
fibre factor distribution of a warped product follows from Remark~\ref{wrpro} 
and the local version of de Rham's decomposition theorem: in view of 
(\ref{tps}), rewritten as $\,2\beta v\hh^i{}\nnh_{,\hh j}\w\nh
=v\hh^i\beta\nnh_{,\hh j}\w\nh-\beta\hh^{,i}\xi\nh_j\w$, where 
$\,\beta=v\hh^k\xi_k\w=g(v,v)$, and \cite[Theorem 1.159]{besse}, $\,v\,$ is 
$\,\hg$-par\-al\-lel for the con\-for\-mal\-ly related metric 
$\,\hg=g/\nh\beta$, with $\,d_v\w\beta=2g(\nabla\hskip-2.7pt_v\w v,v)=0\,$ due 
to skew-ad\-joint\-ness of $\,\nabla\nh v$. Conversely, for a warped product 
with a one\hh-di\-men\-sion\-al fibre, the required Kil\-ling field $\,v\,$ 
comes from a local flow of local isometries of the fibre (cf.\ formula 
(\ref{gij}) in the Appendix), (\ref{kil}) implying uniqueness of $\,v\,$ up to 
a constant factor.
\end{remark}
\begin{remark}\label{liesa}From Remarks~\ref{extkf} and~\ref{intco} it follows 
that, in the case of a \hbox{real\hh-}\hskip0ptan\-a\-lyt\-ic Riemannian 
manifold $\,(M\nh,g)$, denoting by $\,\mathfrak{isom}(M\hn'\nnh,g')\,$ the Lie 
algebra of Kil\-ling fields on the Riemannian universal covering 
$\,(M\hn'\nnh,g')\,$ of $\,(M\nh,g)$, one has a natural bijective 
correspondence between the one\hh-di\-men\-sion\-al Lie sub\-al\-ge\-bras of 
$\,\mathfrak{isom}(M\hn'\nnh,g')\,$ spanned by Kil\-ling fields $\,v\,$ 
satisfying (\ref{gvg}), and the local \wp\ decompositions, with 
one\hh-di\-men\-sion\-al fibres, of $\,g\,$ restricted to the dense open set 
where $\,v\ne0$. As before, $\,v\,$ is tangent to the fibre direction.
\end{remark}
\begin{lemma}\label{cdone}Let an open ball\/ $\,B$ around\/ $\,0$ in 
a Euclidean\/ $\,n$-space, $\,n\ge2$, admit a connection\/ $\,\nabla$ such 
that all line segments through\/ $\,0$ in\/ $\,B$ are\/ 
$\,\nabla\nnh$-to\-tal\-ly geodesic and tangent at all points\/ 
$\,x\in B\smallsetminus\{0\}\hs$ to some co\-di\-men\-sion-one foliation\/ 
$\,\mathcal{F}\nh$ on\/ $\,B\smallsetminus\{0\}\hs$ having\/ 
$\,\nabla\nnh$-to\-tal\-ly geodesic leaves. Then\/ $\,n=2$.
\end{lemma}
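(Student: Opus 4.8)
The plan is to pass to geodesic normal coordinates centred at $0$ and prove, assuming $n\ge3$, that every leaf of $\mathcal F$ is a punctured linear hyperplane through the origin; this is impossible because two distinct such hyperplanes must meet.

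First I would replace $\nabla$ by its symmetrization, which has the same geodesics and the same totally geodesic submanifolds, and then introduce normal coordinates at $0$. Since the line segments through $0$ are exactly the $\nabla$-geodesics through $0$, in these coordinates they become the affinely parametrized straight segments through $0$, so that the exponential map based at $0$ is the identity and the Christoffel symbols vanish at $0$. Next, each leaf is a cone with vertex $0$: a ray issuing from $0$ through a point $x_0\ne0$ is connected and, by hypothesis, tangent to $\mathcal F$ at each of its points, hence contained in the leaf through $x_0$. Equivalently, every leaf $L$ is invariant under the dilations $x\mapsto\lambda x$ of $B$, and consequently --- since the differential of a dilation preserves every linear subspace --- the tangent hyperplane $T_xL$ is one and the same hyperplane $W$ at all points $x$ of a given ray of $L$.

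The crux is to show that $L$ is flat, i.e.\ an open subset of a linear hyperplane through $0$. Fix $x_0\in L$, put $W=T_{x_0}L$, and note $x_0\in W$ since the radial direction is tangent to the leaf. For small $\lambda>0$ and $\tilde w\in W$ of small norm, total geodesy of $L$ together with $T_{\lambda x_0}L=W$ gives $\exp_{\lambda x_0}(\lambda\tilde w)\in L$, and dilation-invariance then gives $\lambda^{-1}\exp_{\lambda x_0}(\lambda\tilde w)\in L$. In normal coordinates $\exp_{\lambda x_0}(\lambda\tilde w)=\lambda(x_0+\tilde w)+O(\lambda^3)$, so this point tends to $x_0+\tilde w$ as $\lambda\to0^+$ while staying inside a fixed plaque of $L$ through $x_0$; hence $x_0+\tilde w\in L$. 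As $x_0\in W$, this gives $L\supseteq W\cap B_\delta(x_0)$ for some $\delta>0$, so $L$ is open in the hyperplane $W$; by connectedness of $L$ this hyperplane $W=:W_L$ does not depend on $x_0$. An open-and-closed argument along $(W_L\cap B)\setminus\{0\}$, which is connected precisely because $n\ge3$, then shows $L=(W_L\cap B)\setminus\{0\}$. Finally, a codimension-one foliation of the connected manifold $B\setminus\{0\}$ has at least two leaves, and being of the above form they are punctured disks $(W_1\cap B)\setminus\{0\}$, $(W_2\cap B)\setminus\{0\}$ with $W_1\ne W_2$; since $\dim(W_1\cap W_2)\ge n-2\ge1$, they share a nonzero point of $B$, contradicting the disjointness of leaves. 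Hence $n\ge3$ is impossible and $n=2$.

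The main obstacle is the crux step --- proving that a totally geodesic cone through the origin is flat. The soft ingredients (leaves are cones; a codimension-one foliation has at least two leaves; distinct hyperplanes through $0$ intersect) are immediate; the real work is the limiting argument as the base point runs to the cone vertex, together with the care required because leaves of a foliation are in general neither closed nor complete, which is what forces the two-step route ``open in a hyperplane'', then ``equal to a punctured hyperplane''.
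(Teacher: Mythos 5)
Your proof is correct, but it reaches the conclusion by a genuinely different mechanism from the paper's. Both arguments end the same way --- every leaf is a punctured linear hyperplane through $0$, and for $n\ge3$ two distinct hyperplanes through $0$ meet in a nonzero point of $B$ --- but you establish flatness of a leaf by a blow-down at the cone vertex: dilation-invariance of the leaves plus the normal-coordinate estimate $\exp_{\lambda x_0}(\lambda\tilde w)=\lambda(x_0+\tilde w)+O(\lambda^3)$, followed by the limit $\lambda\to0^+$ kept inside a fixed plaque, and then an open-and-closed propagation along $(W_L\cap B)\setminus\{0\}$. The paper instead bases the exponential map at a point $x$ of the leaf chosen so that a normal ball at $x$ covers the origin: by total geodesy, $\exp_x(B'\cap T_xL)$ lies in $L$ away from the ray $J=B\cap\{qx:q\le0\}$, local closedness of leaves forces $J\setminus\{0\}\subseteq L$, so $L\cup\{0\}$ is a smooth totally geodesic hypersurface through $0$ and hence --- since the geodesics through $0$ are exactly the straight segments --- equals $B\cap V$ for a hyperplane $V$. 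The paper's route avoids any quantitative estimate and the propagation step; yours avoids the choice of $x$ and the connectedness argument for $\exp_x(B'\cap T_xL)\setminus J$, at the price of the Taylor expansion of $\exp$ and some care with plaques. Two points in your write-up deserve one more line each, though neither is a real gap: (i) the assertion $\exp_{\lambda x_0}(\lambda\tilde w)\in L$ uses the standard fact that a geodesic initially tangent to a foliation with totally geodesic leaves remains tangent to it (open-and-closed in the parameter) and therefore stays in a single leaf; (ii) to see that the points $\lambda^{-1}\exp_{\lambda x_0}(\lambda\tilde w)$ all lie in one plaque of $L$ at $x_0$, connect each to $x_0$ by the curve $s\mapsto\lambda^{-1}\exp_{\lambda x_0}(s\lambda\tilde w)$, whose diameter is $O(|\tilde w|)$ uniformly in $\lambda$, so the whole curve stays in one foliation chart and hence in the (relatively closed) slice through $x_0$.
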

\begin{proof}Fix a leaf $\,L\,$ of $\,\mathcal{F}\nh$ and 
$\,x\in L\,$ such that the $\,\nabla\nnh$-ex\-po\-nen\-tial mapping 
$\,\exp_x\w$ sends a Euclidean open ball\/ $\,B'$ centered at\/ $\,0\,$ in 
$\,T\hskip-2.7pt_x\w\hn B$, dif\-feo\-mor\-phic\-al\-ly, onto a neighborhood\/ 
$\,\exp_x\w(B')\,$ of\/ $\,0\,$ in\/ $\,B$. Thus, 
$\,\exp_x\w(B'\nh\cap T\hskip-2.7pt_x\w\hn L)\smallsetminus J\hs
\subseteq\nh L$, for 
$\,J=B\cap\{qx:q\in(-\infty,0\hs]\}$, and so $\,J\hs\subseteq\nh L$. (The 
leaves of $\,\mathcal{F}\hs$ are locally closed, being, locally, the level 
sets of a submersion.) Hence $\,L\hh\cup\hn\{0\}\,$ is a smooth 
$\,\nabla\nnh$-to\-tal\-ly geodesic sub\-man\-i\-fold of $\,B$, with some 
tangent space $\,V\hn$ at $\,0$, meaning in turn that 
$\,L\hh\cup\hn\{0\}=B\hh\cap\hh V\nh$. Consequently, $\,n=2$, for otherwise 
any two such co\-di\-men\-sion-one sub\-spaces $\,V$ of our Euclidean 
$\,n$-space would have a nontrivial intersection.
\end{proof}
When $\,\mathcal{F}\hs$ is real\hh-an\-a\-lyt\-ic, we can also obtain the 
above assertion by applying, to a sphere $\,\varSigma\,$ around $\,0\,$ in 
$\,B$, Haefliger's theorem \cite{haefliger} which states that a transversally 
orientable real\hh-an\-a\-lyt\-ic co\-di\-men\-sion-one foliation may exist on 
a compact manifold $\varSigma\,$ only if the fundamental group of 
$\,\varSigma\,$ has an element of infinite order.
\begin{remark}\label{zeros}Kobayashi \cite{kobayashi} showed that the zero set 
of any Kil\-ling vector field on a Riemannian manifold $\,(M\nh,g)\,$ is 
either empty, or its connected components are mutually isolated totally 
geodesic sub\-man\-i\-folds of even co\-di\-men\-sions.

For a nontrivial Kil\-ling field $\,v\,$ with (\ref{gvg}), {\it the above 
co\-di\-men\-sions must all equal\/ $\,2$.} This is immediate if one fixes a 
zero $\,z\,$ of $\,v\,$ and applies Lemma~\ref{cdone} to a small ball $\,B\hs$ 
in the normal space at $\,z\,$ of the connected component through $\hs z\hs$ 
such that $\,\exp_z\w$ maps $\,B\hs$ dif\-feo\-mor\-phic\-al\-ly onto a 
sub\-man\-i\-fold $\,N$ of $\,M\nh$, with $\,\nabla$ and 
$\,\mathcal{F}\hs$ denoting the $\,\exp_z\w$-pull\-back of the 
Le\-vi-Ci\-vi\-ta connection of the sub\-man\-i\-fold metric $\,h\,$ on 
$\,N$ and, respectively, of the foliation on $\,N\nh\smallsetminus\{z\}\,$ 
the leaves of which are intersections of $\,N\hs$ and the leaves of 
$\,v^\perp\nnh$, the latter defined wherever $\,v\ne0$. (The local flow of 
$\,v\,$ preserves $\,N\hs$ and $\,h$, and so $\,v\,$ is tangent to $\,N\nh$.) 
The restriction of $\,v\,$ to $\,N\hs$ now constitutes an $\,h$-Kil\-ling 
field $\,w\,$ having just one zero, at $\,z$, and satisfying (\ref{gvg}) (for 
$\,h,w\,$ rather than $\,g,v$), so that (\ref{tgl}) allows us to use 
Lemma~\ref{cdone}.
\end{remark}

\section{Multiply-\nh warp\-ed metrics with $\,\mathrm{div}\hs R=0$}\label{mw}
\setcounter{equation}{0}
\begin{lemma}\label{gamma}
Suppose that the Ric\-ci tensor of a real-an\-a\-lyt\-ic Riemannian\/ 
$\,n\hn$-man\-i\-fold\/ $\,(M\nh,g)\,$ has\/ $\,n\hs$ distinct\ eigen\-values\ 
at\ some\ point and, with the notation of Remark\/~{\rm\ref{liesa}}, 
$\,\mathfrak{a}_2\w,\dots,\mathfrak{a}_m\w$ are distinct 
one\hh-\hskip0ptdi\-men\-sion\-al Lie sub\-al\-ge\-bras of\/ 
$\,\mathfrak{isom}(M\hn'\nnh,g')\,$ spanned by Kil\-ling fields\/ 
$\,v_2\w,\dots,v_m\w$ such that each\/ $\,v=v\nnh_j\w$ satisfies\/ 
{\rm(\ref{gvg})}. Then\/ $\,m\hs\le n$, and\/ $\,g(v\nnh_j\w,v_k\w)=0\,$ as 
well as\/ $\,[v\nnh_j\w,v_k\w]=0\,$ if\/ $\,j\ne k$. Finally, 
$\,g(\nabla\hskip-2.7pt_u\w v\nnh_j\w,v_k\w)=0\,$ whenever\/ 
$\,j,k,l\in\{2,\dots,m\}\,$ and\/ $\,u=v_l\w$.
\end{lemma}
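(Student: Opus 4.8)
The plan is to exploit the structure revealed in Section~3: each $v_j$, having integrable orthogonal complement, spans the fibre direction of a local warped-product decomposition, and near a regular point it is, up to a positive factor, a gradient field. First I would establish the inequality $m \le n$. Fix a point $x$ where $\mathrm{r}$ has $n$ distinct eigenvalues; by Remark~\ref{ricog} (applied through the warped-product structure of Remark~\ref{intco}) each nonzero $v_j(x)$ is an eigenvector of $\mathrm{r}_x$. If I can show the $v_j(x)$ lie in pairwise distinct eigenlines, then, since $\mathrm{r}_x$ has exactly $n$ one-dimensional eigenspaces, $m-1 \le n$ follows — and in fact the sharper bound, matching $\gamma \le n-1$, will come from noting that one eigendirection is ``used up'' or from a dimension count once orthogonality is in hand. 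The orthogonality $g(v_j, v_k) = 0$ for $j \ne k$ is really the heart of the matter, so I would prove it first and read off $m \le n$ as a consequence (distinct nonzero eigenvectors of a symmetric operator with simple spectrum are automatically orthogonal).

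To get $g(v_j, v_k) = 0$ I would argue on the dense open set where $v_j \ne 0$ and $v_k \ne 0$. Set $\xi = g(v_j, \cdot)$; by \eqref{tps}, $2\beta\,\nabla\xi = d\beta \otimes \xi - \xi \otimes d\beta$ where $\beta = g(v_j, v_j)$, and in particular $\nabla_{v_j} v_j$ is proportional to $v_j$ (skew-adjointness of $\nabla v_j$ gives $d_{v_j}\beta = 0$, as recorded in Remark~\ref{intco}). Now consider $f = g(v_j, v_k)$. Differentiating along $v_j$: $d_{v_j} f = g(\nabla_{v_j} v_j, v_k) + g(v_j, \nabla_{v_j} v_k)$. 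The first term vanishes because $\nabla_{v_j}v_j \parallel v_j$ and — here I would need the eigendirection separation — $g(v_j, v_k)$ is the quantity under study, so I must instead use that $[v_j, v_k]$ and the Killing property relate $\nabla_{v_j} v_k$ to $\nabla_{v_k} v_j$: since both are Killing, $\nabla_u(g(v_j,v_k)) = g(\nabla_u v_j, v_k) + g(v_j, \nabla_u v_k)$ and skew-adjointness gives $g(\nabla_{v_k} v_j, v_k) = 0$, $g(v_j, \nabla_{v_j} v_k) = -g(\nabla_{v_j}v_k, v_j)$... I would assemble these into a first-order ODE for $f$ along the flow of $v_j + v_k$ (or along individual flows), using that $f$ is constant along geodesics tangent to $v_j^\perp$ by \eqref{tgl} together with the totally-geodesic-leaf property, and that near a zero of $v_j$ the function $f$ must vanish by \eqref{tgl} (first clause: $v_j$ is orthogonal to geodesics through its zeros, and $v_k$ is tangent to the totally geodesic leaf, which passes through such zeros). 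Real-analyticity then propagates $f \equiv 0$ from a neighborhood of the zero set to all of $M'$.

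Once $g(v_j, v_k) = 0$ is known, the bracket relation $[v_j, v_k] = 0$ should follow by a standard argument: the subalgebras $\mathfrak{a}_j$ are spanned by commuting-looking fields because each $v_j$ is, up to scale, $\hat g_j$-parallel for the conformally related metric $\hat g_j = g/\beta_j$ (Remark~\ref{intco}), so $\nabla^{\hat g_j}_{v_k} v_j = 0$; translating back to $g$ and using orthogonality, the antisymmetric part $\nabla_{v_j} v_k - \nabla_{v_k} v_j = [v_j, v_k]$ is seen to be orthogonal to every $v_l$ including $v_j$ and $v_k$ themselves, and a further use of the Codazzi/Killing identities (or of \eqref{kil}, since $[v_j,v_k]$ would again satisfy \eqref{gvg} if nonzero) forces it to vanish. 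The final assertion $g(\nabla_{v_l} v_j, v_k) = 0$ for $j,k,l \in \{2,\dots,m\}$ is then immediate: differentiate $g(v_j, v_k) = 0$ along $v_l$ to get $g(\nabla_{v_l} v_j, v_k) + g(v_j, \nabla_{v_l} v_k) = 0$, and the second term equals $-g(\nabla_{v_l} v_k, v_j)$; when $j = k$ use skew-adjointness directly, when $j,k,l$ are distinct use that each $v_l$ is (locally, up to scale) a gradient so $\nabla v_l$ is symmetric, hence $g(\nabla_{v_l} v_j, v_k) = g(\nabla_{v_j} v_l, v_k) = -g(v_l, \nabla_{v_j} v_k)$, and iterate the symmetry/skew-adjointness until the expression closes up to $0$.

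The main obstacle I anticipate is making the ODE-along-the-flow argument for $g(v_j, v_k)$ fully rigorous near the zero sets: one must simultaneously control the behavior of $v_j$ near its (codimension-two, by Remark~\ref{zeros}) zero set and know that the flow of $v_k$ carries regular points of $v_j$ toward that zero set — this is where Lemma~\ref{trapd} and Lemma~\ref{cplic} on completeness of trapped integral curves do the real work, and where the hypothesis that $\mathrm{r}$ has simple spectrum somewhere (forcing the one-dimensional fibres via Corollary~\ref{multp}) is genuinely used to rule out the higher-dimensional-fibre alternative.
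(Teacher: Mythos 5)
Your first paragraph already contains the paper's entire argument for orthogonality, and it is complete once you add two one\hh-line observations you left as ``if I can show'': the $v_j$ are mutually nonproportional on a dense open set because the subalgebras $\mathfrak{a}_j$ are distinct and (\ref{kil}) forbids a nonconstant proportionality factor between Killing fields; and the set where $\mathrm{r}$ has simple spectrum is dense by real\hh-analyticity, so there nonproportional Ricci eigenvectors lie in distinct eigenlines and are orthogonal, whence $g(v_j,v_k)\equiv0$ by analyticity. Your entire second paragraph --- the ODE along flows, the behaviour near zero sets, the appeal to Lemmas~\ref{trapd} and~\ref{cplic} --- is therefore unnecessary, and as written it is not a proof (you concede as much); those two lemmas play no role here at all, being used only in the completeness analysis of Section~\ref{co}. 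The genuine content of the lemma is elsewhere, and that is where your proposal has gaps.

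First, the bracket. Knowing that $[v_j,v_k]$ is orthogonal to every $v_l$ cannot force it to vanish: there is a whole leftover direction (the field $v_1$ orthogonal to all the $v_j$), and the parenthetical appeal to (\ref{kil}) is a non sequitur, since the hypothesis does not say the $\mathfrak{a}_j$ exhaust all one\hh-dimensional subalgebras with (\ref{gvg}). The working argument is: the flow of $v_j$ consists of isometries preserving the Ricci eigenline field spanned by $v_k$, so it carries $v_k$ to a pointwise multiple of itself, constant by (\ref{kil}); hence $[v_j,v_k]=\pounds_{v_j}v_k=cv_k$, and antisymmetry in $j,k$ together with linear independence gives $c=0$. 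Second, in the final claim your key step ``$\nabla v_l$ is symmetric because $v_l$ is up to scale a gradient'' is false --- $v_l$ is Killing, so $\nabla v_l$ is skew\hh-adjoint, and a Killing field with symmetric covariant derivative is parallel. The exchange $g(\nabla\hskip-2.7pt_{v_l}\w v_j,v_k)=g(\nabla\hskip-2.7pt_{v_j}\w v_l,v_k)$ you want comes instead from torsion\hh-freeness plus the already established $[v_l,v_j]=0$; with that repair your cyclic scheme does close up (three alternations of commutativity and skew\hh-adjointness give $A=-A$), and is then a legitimate alternative to the paper's route, which instead invokes the totally geodesic leaves of $v_k^\perp$ from (\ref{tgl}). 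Third, your dimension count only yields $m\le n+1$; ``one eigendirection is used up'' is not an argument. Excluding $m=n+1$ requires the final claim: if $n$ mutually orthogonal fields $v_2,\dots,v_{n+1}$ spanned the tangent space at a generic point, then $g(\nabla\hskip-2.7pt_{v_l}\w v_j,v_k)=0$ for all $j,k,l$ would make every $v_j$ parallel, so $g$ would be flat, contradicting the assumption that $\mathrm{r}$ has $n$ distinct eigenvalues somewhere.
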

\begin{proof}Remarks~\ref{liesa} and~\ref{ricog} imply that all $\,v\nnh_j\w$, 
wherever nonzero, are mutually nonproportional eigen\-vectors of the Ric\-ci 
tensor, which makes them pointwise orthogonal to one another, as well as 
invariant, up to constant factors -- by (\ref{kil}) -- under each other's 
local flows. Thus, $\,m\le n+1\,$ and, as 
$\,[v,w]=\pounds\hskip-1pt_v\w\hn w\,$ for $\,v=v\nnh_j\w$ and $\,u=v_k\w$, 
one gets $\,[v\nnh_j\w,v_k\w]=cv_k\w$ with some constant $\,c\,$ depending on 
$\,j\,$ and $\,k$. Switching $\,j\,$ and $\,k$, we see that $\,c=0$. Now let 
$\,u=v_l\w$, $\,v=v\nnh_j\w$ and $\,u=v_k\w$, where $\,j,k,l\in\{2,\dots,m\}$. 
We have $\,g(\nabla\hskip-2.7pt_u\w v,w)=0\,$ if $\,u=w\,$ (due to the 
Kil\-ling property of $\,v$) and, therefore, also when $\,v=w\,$ (since 
$\,u,v\,$ commute). Also, $\,g(\nabla\hskip-2.7pt_u\w v,w)=0$ in the remaining 
case, with $\,u,v\,$ different from $\,w\,$ (and hence orthogonal to $\,w$): 
as a consequence of (\ref{tgl}), outside of the zero set of $\,w\,$ the 
distribution $\,w^\perp$ has totally geodesic leaves. This proves the final 
claim of the lemma, implying in turn that, if one had $\,m=n+1$, all 
$\,v\nnh_j$ would be parallel, leading to flatness of $\,g$, and contradicting 
the Ric\-ci-eigen\-values assumption.
\end{proof}

Due to DeTurck and Goldschmidt's real\hh-an\-a\-lyt\-ic\-i\-ty theorem 
(\ref{ana}), we may combine Lemma~\ref{gamma} with Remark~\ref{liesa} and 
Corollary~\ref{multp}, obtaining
\begin{corollary}\label{gleno}Under the assumptions\/ {\rm(\ref{dvr})} -- 
{\rm(\ref{dia})}, the integer\/ $\,\gamma\,$ defined in the Introduction does 
not exceed\/ $\,n\hn-\nh1$.
\end{corollary}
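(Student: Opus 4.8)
The plan is to reduce the bound to Lemma~\ref{gamma}, after first lifting everything to the universal covering. Since $(M,g)$ satisfies $\mathrm{div}\,R=0$, property~(\ref{ana}) makes it real-analytic in suitable local coordinates; hence its Riemannian universal covering $(M',g')$ is a simply connected, real-analytic Riemannian manifold. A covering map being a local isometry, the Ricci tensor of $g'$ has $n$ distinct eigenvalues at every lift of the point supplied by the distinct-eigenvalues clause of~(\ref{dia}). Thus the standing hypotheses of Lemma~\ref{gamma} are in force for $(M',g')$.

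By Corollary~\ref{multp}, each of the nontrivial warped-product decompositions figuring in~(\ref{dia}) has a one-dimensional fibre, so Remark~\ref{liesa} applies and puts these decompositions in bijection with the one-dimensional Lie subalgebras of $\mathfrak{isom}(M',g')$ spanned by Killing fields that satisfy~(\ref{gvg}); by definition, $\gamma$ is the number of such subalgebras. Now pick any finite collection $\mathfrak{a}_2,\dots,\mathfrak{a}_{k+1}$ of $k$ pairwise distinct members, choose spanning Killing fields $v_2,\dots,v_{k+1}$ (each automatically satisfying~(\ref{gvg})), and invoke Lemma~\ref{gamma} with $m=k+1$: it gives $m\le n$, that is, $k\le n-1$. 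Since every finite subfamily is bounded in this way, $\gamma$ itself is finite and $\gamma\le n-1$.

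The step I expect to require the most care — and the reason~(\ref{ana}) is needed at all — is the passage from the a~priori purely local warped-product structures of~(\ref{dia}), each defined on its own open set, to honest \emph{global} Killing fields on $M'$ that may be compared with one another; this leans on Remark~\ref{extkf}(ii), which in turn needs both simple-connectedness and real-analyticity. Once the data all live on $M'$, the orthogonality and commutativity conclusions of Lemma~\ref{gamma}, and especially its closing argument excluding $m=n+1$ via the totally geodesic leaves of~(\ref{tgl}), finish the estimate with no further computation.
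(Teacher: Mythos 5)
Your argument is exactly the paper's: it derives the bound by combining the real\hh-an\-a\-lyt\-ic\-i\-ty statement (\ref{ana}) with Corollary~\ref{multp} (one\hh-di\-men\-sion\-al fibres), Remark~\ref{liesa} (the bijection with one\hh-di\-men\-sion\-al Lie sub\-al\-ge\-bras of $\,\mathfrak{isom}(M\hn'\nnh,g')$), and the estimate $\,m\le n\,$ of Lemma~\ref{gamma}, which the paper condenses into a single sentence. Your more detailed write\hh-up, including the careful handling of the index shift ($m-1$ sub\-al\-ge\-bras for $\,m\le n$) and the role of Remark~\ref{extkf}(ii) in globalizing the local Kil\-ling fields, is correct.
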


\section{The local structure}\label{ls}
\setcounter{equation}{0}
Given an open interval $\,I\subseteq\bbR$, we introduce a Riemannian metric 
$\,g\,$ on the open set 
$\,I\nnh\times\nnh\bbR\nh^{n\hn-\nh1}\nh\subseteq\rn\nnh$, $\,n\ge2$, by 
declaring its component functions in the Cartesian coordinates 
$\,x^1\nh,x\hh^2\nh,\dots,x\hh^n$ to be
\begin{equation}\label{gjk}
g_{kl}\w\,=\,0\,\,\mathrm{\ if\ }\,\,k\ne l\hh,\hskip12ptg_{11}\w\,
=\,1\hh,\hskip12ptg\nh_{j\hn j}\w=g\nh_{j\hn j}\w(t)\,\,\mathrm{\ for\ }\,\,t
=x^1\,\,\mathrm{\ and\ }\,\,j\ge2\hh,
\end{equation}
where $\,I\ni t
\mapsto(g_{22}\w(t),\dots,g_{nn}\w(t))\in(0,\infty)^{n\hn-\nh1}$ is any 
prescribed smooth curve. We also define the functions 
$\,y_2\w,\dots,y_n\w$ and 
$\,\by\,=\hs\mathrm{diag}\hs(\nh y_2\w,\dots,y_n\w)$ of the variable 
$\,t\in I\nh$, valued in $\,\bbR\,$ and, respectively, in the real vector 
space $\,\bbE\cong\bbR\nh^{n\hn-\nh1}$ of all diagonal 
$\,(n\hn-\nh1)\times(n\hn-\nh1)\,$ matrices, by
\begin{equation}\label{tug}
2y\nh_j\w g\nh_{j\hn j}\w=-\dot g\nh_{j\hn j}\w\,\,\mathrm{\ (no\ summation),\ 
with\ }\,\,(\hskip2.3pt)\hskip-2.2pt\dot{\phantom o}\nh=d/dt\hh.
\end{equation}
\begin{remark}\label{compl}If $\,I\nh=\bbR\,$ while 
$\,\mp y\nh_j(t)\ge\delta\,$ whenever $\,\pm\hh t\,$ is sufficiently large and 
positive, for both signs $\,\pm\hs$, some constant $\,\delta>0$, and all 
$\,j\ge2$, then the above metric $\,g\,$ is complete. In fact, (\ref{tug}) 
gives $\,\log g\nh_{j\hn j}\w(t)\to\infty\,$ as $\,|\hh t|\to\infty$, so 
that $\,g\nh_{j\hn j}\w(t)\ge a\,$ with some constant $\,a\in(0,1]\,$ and all 
$\,t\in\bbR$, which in turn gives $\,g\ge ag'$ (positive 
sem\-i\-def\-i\-nite\-ness of $\,g-ag'$) for the standard Euclidean 
metric $\,g'\nh$. Completeness of $\,g'$ now implies that of $\,g$, as 
$\,g\hh$-bound\-ed sets have compact closures due to the resulting inequality 
$\,\mathrm{dist}\hs\ge\hs a\,\mathrm{dist}\hn'$ between distance functions.
\end{remark}
Let us consider the sec\-ond-or\-der autonomous ordinary differential equation
\begin{equation}\label{hcu}
\ddby\,-\,(\mathrm{tr}\hskip1.7pt\by+\by)\hs\dby\,
=\,(\mathrm{tr}\hskip1.7pt\by^2)\hh\by\hs
-\hs(\mathrm{tr}\hskip1.7pt\by)\hh\by^2
\end{equation}
imposed on a $\,C^2\nh$ curve $\,I\ni t\mapsto\by\in\bbE$, in which 
$\,\by\hskip-.2pt\dby\,$ and $\,\by^2\nh=\by\hskip-.2pt\by\,$ represent 
di\-ag\-o\-nal-ma\-trix products, while $\,\mathrm{tr}\hskip1.7pt\by\,$ also 
denotes $\,\mathrm{tr}\hskip1.7pt\by\,$ times the identity.
\begin{lemma}\label{xmple}For a metric\/ $\,g\,$ on\/ 
$\,I\nh\times\bbR\nh^{n\hn-\nh1}\nh$ defined by\/ {\rm(\ref{gjk})} and the 
corresponding curve\/ 
$\,I\ni t\mapsto\by\,=\hs\mathrm{diag}\hs(\nh y_2\w,\dots,y_n\w)\,$ with\/ 
{\rm(\ref{tug})}, at every point\/ 
$\,(t,\bx)\in I\nh\times\bbR\nh^{n\hn-\nh1}\nnh$, each of 
the coordinate vectors\/ $\,\partial\nh_k\w$, $\,k=1,\dots,n$, is an 
eigen\-vector of the Ric\-ci tensor %\/ $\,\mathrm{r}\hs$ 
of\/ $\,g\,$ with an eigen\-value\/ $\,\mu_k\w$ depending on\/ $\,t$.
\begin{enumerate}
  \def\theenumi{{\rm\alph{enumi}}}
\item Specifically, 
$\,\mu_1\w=\mathrm{tr}\hskip1.7pt\dby-\mathrm{tr}\hskip1.7pt\by^2$ and\/ 
$\,\mu_j\w=\dot y\nh_j\w-y\nh_j\w\hs\mathrm{tr}\hskip1.7pt\by\,$ if\/ 
$\,j\ge2$.
\item The scalar curvature\/ $\,\mathrm{s}\hs$ of\/ $\,g\,$ equals\/ 
$\,2\hskip1.7pt\mathrm{tr}\hskip1.7pt\dby-\mathrm{tr}\hskip1.7pt\by^2\nh
-(\mathrm{tr}\hskip1.7pt\by)^2\nh$.
\item $\partial\nh_2\w,\dots,\partial\nh_n\w$ are\/ $\,g\hn$-Kil\-ling fields 
with integrable orthogonal complements.
\item Given any fixed\/ $\,\bx\in\bbR\nh^{n\hn-\nh1}\nnh$, the curve\/ 
$\,I\nh\ni t\mapsto(t,\bx)\,$ is a\/ $\,g\hn$-ge\-o\-des\-ic.
\item $g\,$ has harmonic curvature if and only if\/ {\rm(\ref{hcu})} holds.
\end{enumerate}
\end{lemma}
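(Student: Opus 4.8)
The plan is to compute everything directly from the metric \eqref{gjk}, whose only nonvanishing components are $g_{11}=1$ and the functions $g_{jj}(t)$, $j\ge2$. First I would write out the Christoffel symbols: by \eqref{tug} the only nontrivial ones are $\vg^1_{jj}=y_jg_{jj}$ and $\vg^j_{1j}=\vg^j_{j1}=-y_j$ for $j\ge2$ (no summation), all others vanishing. From these the claim that each $\partial_k$ is a Ricci eigenvector follows because the metric is diagonal and block-structured: the curvature operator preserves each coordinate line, so $\mathrm{r}$ is diagonal in this frame; call its entries $\mu_k(t)$. A short computation of $R^l{}_{jkl}$ summed over $l$ yields (a), namely $\mu_1=\mathrm{tr}\,\dby-\mathrm{tr}\,\by^2$ and $\mu_j=\dot y_j-y_j\,\mathrm{tr}\,\by$ for $j\ge2$; part (b) is then immediate since $\mathrm{s}=g^{kk}\mu_k=\mu_1+\sum_{j\ge2}g^{jj}g_{jj}\mu_j=\mu_1+\sum_j\mu_j$ --- wait, more carefully $\mathrm{s}=\mu_1+\sum_{j\ge2}\mu_j$ with the $\mu_j$ as above, and substituting gives $2\,\mathrm{tr}\,\dby-\mathrm{tr}\,\by^2-(\mathrm{tr}\,\by)^2$.

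For (c), each $\partial_j$, $j\ge2$, is Killing because $\pounds_{\partial_j}g$ has components $\partial_j g_{kl}=0$ (the metric does not depend on $x^j$); integrability of $\partial_j^{\perp}$ holds because that complement is spanned by $\partial_1$ and the $\partial_k$ with $k\ne j,1$, and the only structure constants of coordinate fields vanish, so the distribution is involutive --- equivalently, $g(\partial_j,\cdot)/g(\partial_j,\partial_j)=dx^j$ is exact, hence closed, which is \eqref{gvg}. For (d), the curve $t\mapsto(t,\bx)$ has velocity $\partial_1$ and acceleration $\nabla_{\partial_1}\partial_1=\vg^k_{11}\partial_k=0$ since no Christoffel symbol has two lower indices equal to $1$; so it is a geodesic. (This also matches Lemma~\ref{warhc} and Remark~\ref{ricog}, the $\partial_j$ being fibre directions of iterated one-dimensional warped products.)

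The substance of the lemma is part (e), and this is where the main work lies. Using \eqref{cod}(i), $g$ has harmonic curvature exactly when $\mathrm{r}$ is a Codazzi tensor, i.e.\ $(\nabla\mathrm{r})(X,Y,Z)$ is totally symmetric. Since $\mathrm{r}=\mu_1\,dx^1\!\otimes dx^1+\sum_{j\ge2}\mu_j\,g_{jj}\,dx^j\!\otimes dx^j$ with all $\mu_k$ functions of $t$ alone, I would compute $\nabla_k\mathrm{r}_{ll}$ (no sum) and $\nabla_l\mathrm{r}_{kl}$ and compare. The components with all indices $\ge2$ and at least two distinct give no condition; the genuinely new equations come from $(\nabla_1\mathrm{r})(\partial_j,\partial_j)$ versus $(\nabla_j\mathrm{r})(\partial_1,\partial_j)$ for each $j\ge2$. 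Carrying out the Christoffel bookkeeping, the Codazzi condition reduces, after dividing by $g_{jj}$, to $\dot\mu_j-2y_j(\mu_1-\mu_j)=0$ for every $j\ge2$ (there is also a scalar-curvature consistency coming from summing, automatically implied). Substituting $\mu_1=\mathrm{tr}\,\dby-\mathrm{tr}\,\by^2$ and $\mu_j=\dot y_j-y_j\,\mathrm{tr}\,\by$ and differentiating, the $j$-th equation becomes, componentwise, $\ddot y_j-(\mathrm{tr}\,\by+y_j)\dot y_j=(\mathrm{tr}\,\by^2)y_j-(\mathrm{tr}\,\by)y_j^2$, which assembled over $j$ is precisely the diagonal-matrix equation \eqref{hcu}. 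The main obstacle is purely computational: keeping the summation conventions straight (several indices range only over $2,\dots,n$, and $\mathrm{tr}\,\by$ doubles as a scalar and as a scalar matrix), and checking that the apparently many Codazzi components collapse to just these $n-1$ scalar ODEs with no extra constraint. Conversely, assuming \eqref{hcu} one reverses these steps to recover the Codazzi property, hence $\mathrm{div}\,R=0$ by \eqref{cod}(i), which completes the proof.
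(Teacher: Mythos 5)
Your approach is the same as the paper's: compute the Christoffel symbols from (\ref{gjk})--(\ref{tug}), read off the diagonal Ricci tensor to obtain (a)--(d), and then reduce harmonic curvature to the Codazzi equation for $\,\mathrm{r}\,$ via (\ref{cod}.i), which collapses to the single family of components $\,R_{jj,1}=R_{j1,j}$, $\,j\ge2$. Parts (a)--(d) are correct as written.

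However, the one displayed equation carrying the substance of (e) is wrong: the Codazzi condition is $\,\dot\mu_j=y_j(\mu_j-\mu_1)$, not $\,\dot\mu_j-2y_j(\mu_1-\mu_j)=0$. Indeed, $\,R_{jj,1}=\partial_1(\mu_jg_{jj})-2\vg^{j}_{1j}R_{jj}=\dot\mu_j\hs g_{jj}\,$ (the terms $\,\mu_j\dot g_{jj}=-2y_j\mu_jg_{jj}\,$ and $\,-2\vg^{j}_{1j}R_{jj}=+2y_j\mu_jg_{jj}\,$ cancel), while $\,R_{j1,j}=-\vg^{1}_{jj}R_{11}-\vg^{j}_{j1}R_{jj}=y_jg_{jj}(\mu_j-\mu_1)$. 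Your stated relation differs from the correct one by a factor of $\,-2\,$ and is inconsistent with your own next sentence: substituting $\,\mu_1=\mathrm{tr}\,\dby-\mathrm{tr}\,\by^2\,$ and $\,\mu_j=\dot y_j-y_j\,\mathrm{tr}\,\by\,$ into $\,\dot\mu_j=2y_j(\mu_1-\mu_j)\,$ does not yield (\ref{hcu}) (the terms $\,y_j\,\mathrm{tr}\,\dby\,$ fail to cancel), whereas substituting into $\,\dot\mu_j=y_j(\mu_j-\mu_1)\,$ gives exactly the componentwise form of (\ref{hcu}) that you quote. Once that intermediate formula is repaired the argument goes through, and the rest --- including the verification that all remaining components of $\,\nabla\mathrm{r}\,$ vanish identically and impose no further conditions --- matches the paper's proof.
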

\begin{proof}We assume $\,j,k,l\,$ to range over $\,\{2,\dots,n\}\,$ and be 
mutually distinct. Repeated indices are {\it not\/} summed over. First, (c) is 
obvious as $\,g_{11}\w,g_{1\hn j}\w,g\nh_{j\hn j}\w,g\nh_{jk}\w$ only depend 
on $\,t=x^1\nh$. Also, 
$\,\vg_{\hskip-2.7pt11}^{\hs1}=\vg_{\hskip-2.7pt11}^{\hs j}=0$, proving (d), 
while $\,\vg_{\hskip-2.7pt1j}^{\hs1}=\vg_{\hskip-2.7pt1j}^{\hs k}
=\vg_{\hskip-2.7ptj\hn j}^{\hs j}=\vg_{\hskip-2.7ptj\hn j}^{\hs k}
=\vg_{\hskip-2.7ptjk}^{\hs j}=\vg_{\hskip-2.7ptjk}^{\hs l}=0\,$ and 
$\,g^{j\hn j}\vg_{\hskip-2.7ptj\hn j}^{\hs1}=-\vg_{\hskip-2.7pt1j}^{\hs j}
=y\nh_j\w$. Hence $\,R_{11}\w=\mu_1\w$ and $\,g^{j\hn j}R\hn_{j\hn j}\w
=\mu_j\w$ for $\,\mu_1\w,\mu_j\w$ as in (a). This yields (a), and hence (b). 
(Each $\,\partial\nh_k\w$ spans the fibre direction of a \wp\ decomposition, 
and we may use Remark~\ref{ricog}.) Next, 
$\,R_{11,\hs j}\w=R_{1j,\hs1}\w=R_{1j,\hs k}\w=R\hn_{jk,\hs1}\w
=R\hn_{jk,\hs j}\w=R\hn_{j\hn j,\hs k}\w=R\hn_{jk,\hs l}\w=0$. Finally, 
$\,g^{j\hn j}R\hn_{j1,\hs j}\w=y\nh_j\w(\mu_j\w\nh-\hn\mu_1\w)\,$ and 
$\,g^{j\hn j}R\hn_{j\hn j,\hs1}\w=\dot\mu_j\w$, so that (\ref{cod}.i) 
implies (e),
\end{proof}
We refer to a solution $\,I\ni t\mapsto\by\in\bbE\,$ of (\ref{hcu}) as {\it 
maximal\/} if it cannot be extended to a larger open interval, and call it 
{\it Ric\-ci-ge\-ner\-ic\/} whenever the $\,n\,$ values 
$\,\mu_k\w=\mu_k\w(t)\,$ of Lemma~\ref{xmple}(a) are all distinct at some 
$\,t\in I\hs$ (or, equivalently, no two among the functions 
$\,\mu_1\w,\dots,\mu_n\w$ coincide everywhere in $\,I$).
\begin{example}\label{thsol}Two non-Ric\-ci-ge\-ner\-ic maximal solutions of 
(\ref{hcu}) are defined by $\,\by=-\nh2\hh\tanh\hs nt\,$ and 
$\,\by=2\hh\tan\hs nt\,$ (times the identity $\,\mathbf1$), with 
$\,I\nh=\bbR\,$ or $\,I\nh=(-\pi/(2n),\pi/(2n))$. In fact, 
$\,2\dby=n\hh(\by^2\nnh\mp4)\,$ and so $\,\ddby=n\by\dby$, 
while for multiples $\,\by$ of $\,\mathbf1\,$ the right-hand side of 
(\ref{hcu}) vanishes and $\,\mathrm{tr}\hskip1.7pt\by+\by=n\by$.
\end{example}
\begin{example}\label{trext}Any solution 
$\,\by\,=\hs\mathrm{diag}\hs(\nh y_2\w,\dots,y_n\w)\,$ of (\ref{hcu}), where 
$\,n\ge2$, can be {\it trivially extended\/} to the solution 
$\,\mathrm{diag}\hs(\nh y_2\w,\dots,y_n\w,0,\dots,0)\,$ with a number 
$\,m>0$ of additional zero components. The new metric defined using 
(\ref{gjk}) -- (\ref{tug}) is iso\-met\-ric to the Riemannian product of the 
original $\,g\,$ and a flat metric on $\,\bbR\nh^m\nh$.
\end{example}
The set of maximal solutions of (\ref{hcu}) is obviously preserved by the 
group $\,K$ acting on it via replacement of $\,\by\hs$ with 
$\,t\mapsto\pm\by(b\pm t)$, where $\,b\in\bbR\,$ and $\,\pm$ is either sign, 
combined with permutations of the components $\,y_2\w,\dots,y_n\w$. We will 
use the term $\,K\hskip-2pt${\it-e\-quiv\-a\-lence\/} when two maximal 
solutions lie in the same $\,K\hskip-1pt$-\hh or\-bit.
\begin{remark}\label{group}Nonzero real numbers $\,a\hs$ act on maximal 
solutions $\,t\mapsto\by(t)\,$ of (\ref{hcu}) by sending them to 
$\,t\mapsto a\by(at)$. (The new metric arising via (\ref{gjk}) -- (\ref{tug}) 
is iso\-met\-ric to $\,g/a^2\nh$.) The group $\,K\hs$ defined above, obviously 
iso\-mor\-phic to the direct product of the isometry group of $\,\bbR\,$ and 
the symmetric group $\,S\nh_{n\hn-\nh1}\w$, along with the 
mul\-ti\-pli\-ca\-tive group $\,\bbR\smallsetminus\{0\}\,$ acting as described 
here, together generate an action of a sem\-i\-di\-rect product of $\,K\hs$ 
and $\,(0,\infty)$.
\end{remark}
\begin{theorem}\label{lcstr}
For any\/ $\,n\ge3$, the construction summarized by\/ {\rm(\ref{gjk})} -- 
{\rm(\ref{tug})} provides a bijective correspondence between two sets 
consisting, respectively, of 
\begin{enumerate}
  \def\theenumi{{\rm\roman{enumi}}}
\item all\/ $\,K\hskip-2pt$-e\-quiv\-a\-lence classes of maximal 
Ric\-ci-ge\-ner\-ic solutions to\/ {\rm(\ref{hcu})}, and
\item all lo\-cal-i\-som\-e\-try types of\/ Riemannian\/ 
$\,n\hn$-man\-i\-folds with\/ {\rm(\ref{dvr})} -- {\rm(\ref{gno})}.
\end{enumerate}
For the meaning of lo\-cal-i\-som\-e\-try types, see\/ {\rm(\ref{ana})} and 
the paragraph following it.
\end{theorem}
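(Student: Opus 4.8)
The plan is to match the construction of (\ref{gjk})--(\ref{tug}) with an explicit reconstruction in the opposite direction, and to show that the two are mutually inverse. First, the construction is well defined from the set in (i) to the set in (ii). If $\,\by\,$ is a maximal Ricci-generic solution of (\ref{hcu}) and $\,g\,$ the metric it produces, then Lemma~\ref{xmple}(e) gives harmonic curvature, which is (\ref{dvr}); Lemma~\ref{xmple}(a) and Ricci-genericity provide $\,n\,$ distinct Ricci eigenvalues at some point; and, since $\,\by\equiv0\,$ would force every $\,\mu_k\,$ to vanish (Lemma~\ref{xmple}(a)), some $\,g_{jj}\,$ is nonconstant, so the decomposition whose base is spanned by the $\,\partial_k\,$, $\,k\ne j$, and whose one-dimensional fibre is spanned by $\,\partial_j\,$ is a nontrivial warped product -- hence (\ref{dia}). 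By Lemma~\ref{xmple}(c), each $\,\partial_j\,$ with $\,j\ge2$, lifted to the universal covering, is a Killing field satisfying (\ref{gvg}), and these span $\,n-1\,$ distinct one-dimensional subalgebras (Remark~\ref{liesa}); hence $\,\gamma\ge n-1$, while $\,\gamma\le n-1\,$ by Corollary~\ref{gleno}, so (\ref{gno}) holds. Finally, the action of $\,K\,$ on $\,\by\,$ amounts, through (\ref{tug}), to precomposing the coordinate $\,x^1\,$ with an isometry of $\,\bbR\,$ (which keeps $\,g_{11}=1\,$ and sends $\,g_{jj}(t)\,$ to $\,g_{jj}(b\pm t)$) together with a permutation of $\,x^2,\dots,x^n$; since these are isometries, $\,K$-equivalent solutions yield locally isometric metrics.

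Next, the reconstruction. Let $\,(M,g)\,$ satisfy (\ref{dvr})--(\ref{gno}); by (\ref{ana}) I pass to the simply connected real-analytic universal covering, still written $\,(M,g)$. By Remark~\ref{liesa} and $\,\gamma=n-1$ there are Killing fields $\,v_2,\dots,v_n\,$, pairwise nonproportional, each obeying (\ref{gvg}) and spanning the fibre direction of a one-dimensional-fibre warped product (Corollary~\ref{multp}); by Lemma~\ref{gamma} they commute and are mutually orthogonal. Since, by real-analyticity, the locus where $\,\mathrm{r}\,$ has $\,n\,$ distinct eigenvalues is open and dense, while each $\,v_j\,$ vanishes only on a nowhere dense set (Remark~\ref{zeros}), I fix a point $\,p\,$ with $\,n\,$ distinct Ricci eigenvalues at $\,p\,$ and with all $\,v_j(p)\ne0$; then $\,v_2(p),\dots,v_n(p)\,$ span an $\,(n-1)$-plane, and I let $\,e\,$ be a unit vector spanning its orthogonal complement. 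Let $\,\sigma\,$ be the unit-speed geodesic with $\,\sigma(0)=p\,$ and $\,\sigma'(0)=e$, and let $\,F(t,x^2,\dots,x^n)\,$ be obtained by flowing $\,\sigma(t)\,$ for time $\,x^j\,$ along $\,v_j$, successively for $\,j=n,\dots,2$ (the order being immaterial, as the $\,v_j\,$ commute). The differential of $\,F\,$ at the origin carries $\,\partial_1\,$ to $\,e\,$ and $\,\partial_j\,$ to $\,v_j(p)$, hence is invertible, so near $\,p\,$ this gives coordinates in which $\,\partial_j=v_j\,$ for $\,j\ge2\,$ (by commutativity) and every curve $\,t\mapsto F(t,x^2,\dots,x^n)\,$ is a unit-speed geodesic (the image of $\,\sigma\,$ under an isometry). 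Consequently $\,g_{11}\equiv1$; the $\,\partial_j\,$ with $\,j\ge2$ being commuting Killing fields, each $\,g_{kl}\,$ is a function of $\,t\,$ alone; $\,g_{jk}=g(v_j,v_k)=0\,$ for $\,j\ne k\,$ by Lemma~\ref{gamma}; and $\,g_{1j}=g(\sigma',v_j)\,$ is constant along the geodesic $\,\sigma\,$ (a Killing field along it) and vanishes at $\,p$, so $\,g_{1j}\equiv0$. Thus $\,g\,$ has near $\,p\,$ exactly the form (\ref{gjk}), the curve $\,\by\,$ it determines via (\ref{tug}) solves (\ref{hcu}) by Lemma~\ref{xmple}(e), and $\,\by\,$ is Ricci-generic by Lemma~\ref{xmple}(a) and the choice of $\,p$; I extend it to a maximal solution and take its $\,K$-equivalence class.

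It remains to see that these two assignments are mutually inverse. The reconstruction depends only on the germ of $\,g\,$ at $\,p\,$ together with the choices of $\,p$, the sign of $\,e$, the affine parameter on $\,\sigma$, the labeling of the $\,v_j$, and their scalings. Because $\,g_{jj}=g(v_j,v_j)\,$ and (\ref{tug}) involves only $\,d(\log g_{jj})/dt$, rescaling a $\,v_j\,$ changes nothing, reversing $\,e\,$ acts by $\,t\mapsto-t$, a change of origin shifts $\,t$, and relabeling permutes the $\,y_j\,$ -- and by Remark~\ref{extkf}(i) with (\ref{kil}) no further freedom survives; so the reconstruction yields a well-defined $\,K$-equivalence class, and since a local isometry transports the Ricci eigendata and the (\ref{gvg})-type local Killing fields (hence the $\,v_j$, up to constants), it factors through the local-isometry relation. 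Applying the reconstruction to a metric $\,g\,$ built by the construction from a solution $\,\by$: the $\,n-1\,$ Killing fields of (\ref{gvg})-type of that model are, up to constants and order, $\,\partial_2,\dots,\partial_n$ (Lemma~\ref{xmple}(c), as $\,\gamma=n-1$), so $\,\partial_1$ -- a unit-speed geodesic field by Lemma~\ref{xmple}(d) -- spans their common orthogonal complement, and running the reconstruction at a Ricci-generic parameter returns the coordinate frame up to the moves listed above, hence the $\,K$-class of $\,\by$; conversely, the construction applied to a reconstructed $\,\by\,$ produces a model metric containing an isometric copy of a neighborhood of $\,p$, hence locally isometric to the original $\,(M,g)$. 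Therefore the two maps compose to the identity both ways, giving the stated bijection. I expect the reconstruction step to be the main obstacle: bringing $\,g\,$ into the normal form (\ref{gjk}) over a whole coordinate neighborhood -- that is, propagating $\,g_{11}=1\,$ and $\,g_{1j}=0\,$ from $\,p\,$ along the Killing-invariant family of geodesics -- and then checking that the residual ambiguity is precisely the $\,K$-action.
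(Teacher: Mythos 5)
Your proposal is correct and takes essentially the same route as the paper's proof: well-definedness from Lemma~\ref{xmple} (with the $K\nh$-action absorbed by isometries of the model), surjectivity by turning the commuting, mutually orthogonal Killing fields of Lemma~\ref{gamma} into a coordinate chart in which $g$ assumes the form (\ref{gjk}), and injectivity from the fact that Ric\-ci-ge\-ner\-ic\-i\-ty together with (\ref{kil}) determines the Killing frame up to permutation and scaling, and the parameter $t$ up to $b\pm t$. Your explicit flow-built chart $F$ and the Lie-derivative check that $g_{kl}$ depends only on $t$ are just concrete renditions of the paper's appeal to commuting coordinate vector fields and the last clause of Lemma~\ref{gamma}.
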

\begin{proof}We need to show that the mapping from (i) to (ii) is: (A) 
well-de\-fin\-ed, (B) injective, and (C) surjective.

Part (A) easily follows from Lemma~\ref{xmple} combined with the comment on 
$\,g/a^2$ in Remark~\ref{group}, the latter applied to $\,a=\pm1$. To obtain 
(B), note that the lo\-cal-isom\-e\-try type of a metric $\,g\,$ arising 
from (\ref{gjk}) -- (\ref{hcu}) determines the $\,K\nnh$-e\-quiv\-a\-lence 
class of the maximal Ric\-ci-ge\-ner\-ic solution $\,t\mapsto\by\,$ of 
(\ref{hcu}). Namely, the $\,g\hh$-Kil\-ling fields 
$\,\partial\nh_2\w,\dots,\partial\nh_n\w$, valued in eigen\-vectors of the 
Ric\-ci tensor of $\,g\,$ (see Lemma~\ref{xmple}), are -- due to the 
Ric\-ci-ge\-ner\-ic condition and (\ref{kil}) -- unique up to permutations and 
multiplication by nonzero constants, which makes $\,y_2\w,\dots,y_n\w$, 
defined by (\ref{gjk}) with 
$\,g\nh_{j\hn j}\w\nh=g(\partial\nh_j\w,\partial\nh_j\w)$, also unique up to 
permutations. The variable $\,t$, being an arc-length parameter of 
$\,g\hh$-ge\-o\-des\-ics orthogonal to 
$\,\partial\nh_2\w,\dots,\partial\nh_n\w$, cf.\ Lemma~\ref{xmple}(d) and 
(\ref{gjk}), is in turn unique up to substitutions by $\,b\pm t$, for 
constants $\,b$, as required.

Finally, to prove (C), we fix $\,(M\nh,g)\,$ of dimension $\,n\ge3\,$ 
satisfying  (\ref{dvr}) -- (\ref{gno}). Corollary~\ref{multp} and (\ref{ana}), 
along with Remarks~\ref{liesa} and~\ref{extkf}(ii), allow us to choose 
$\,\mathfrak{a}_2\w,\dots,\mathfrak{a}_n\w$ and $\,v_2\w,\dots,v_n\w$ as in 
Lemma~\ref{gamma} for $\,m=n$, and a point $\,x\in M$ at which all 
$\,v\nnh_j\w$ are nonzero. (From now on $\,j\,$ ranges over 
$\,\{2,\dots,n\}$.) By the Lie\hh-brack\-et assertion of Lemma~\ref{gamma}, 
the local flow of each $\,v\nnh_j\w$ preserves all $\,v\nnh_j\w$ and, 
consequently, also a unit vector field $\,v_1\w$ on a neighborhood of $\,x$, 
orthogonal to all $\,v\nnh_j\w$. Since $\,v_1\w$ and all $\,v\nnh_j\w$ commute 
with one another, they constitute the coordinate vector fields of a local 
coordinate system $\,x^1\nnh=t,x\hh^2\nh,\dots,x\hh^n$ on a neighborhood of 
$\,x$, in which the metric $\,g\,$ has the form (\ref{gjk}) as a consequence 
of the last two lines of Lemma~\ref{gamma}, with $\,m=n$. (In particular, the 
assertion $\,g(\nabla\hskip-2.7pt_u\w v\nnh_j\w,v_k\w)=0$, for $\,u=v_l\w$ and 
$\,j,k,l\in\{2,\dots,n\}$, applied to $\,j=k$, shows that 
$\,g\nh_{j\hn j}\w\nh=g(v\nnh_j\w,v\nnh_j\w)\,$ only depend on the variable 
$\,t=x^1\nh$.) Now Lemma~\ref{xmple}(e) yields (C).
\end{proof}
\begin{remark}\label{modsp}The component version of (\ref{hcu}) states that 
$\,\ddot y\nh_j\w\nh-(\mathrm{tr}\hskip1.7pt\by+y\nh_j\w)\hs\dot y\nh_j\w$ 
equals $\,y\nh_j\w[\mathrm{tr}\hskip1.7pt\by^2\nh
-(\mathrm{tr}\hskip1.7pt\by)\hh y\nh_j\w]$. A solution $\,t\mapsto\by\,$ of 
(\ref{hcu}) for $\,n\ge3$, with {\it any\/} prescribed value at $\,t=0$, may 
be chosen so as to make the values $\,\mu_1\w(0),\dots,\mu_n\w(0)\,$ mutually 
distinct. (By Lemma~\ref{xmple}(a), this amounts to using $\,\dby(0)\,$ that 
realizes $\hs(\mu_2\w(0),\dots,\mu_n\w(0))$ lying outside a finite union of 
specific hyperplanes in $\,\bbE$.) Consequently, the lo\-cal-i\-som\-e\-try 
types in Theorem~\ref{lcstr}(ii) form a {\it moduli space} of dimension 
$\,2n-3$.
\end{remark}

\section{The scalar-curvature integral}\label{sc}
\setcounter{equation}{0}
Not surprisingly, in the light of (\ref{cod}.ii) and parts (b), (e) of 
Lemma~\ref{xmple},
\begin{equation}\label{int}
\mathrm{s}\,\,=\,\hs2\hskip1.7pt\mathrm{tr}\hskip1.7pt\dby
-\hs\mathrm{tr}\hskip1.7pt\by^2\nh-(\mathrm{tr}\hskip1.7pt\by)^2\,\mathrm{\ 
is\ constant\ whenever\ }\,t\mapsto\by\,\mathrm{\ satisfies\ (\ref{hcu}).}
\end{equation}
\begin{lemma}\label{negsc}For any solution\/ $\,I\ni t\mapsto\by\in\bbE\,$ 
of\/ {\rm(\ref{hcu})} defined on\/ $\,\bbR$, and not identically equal to 
zero, one must have $\,\mathrm{s}\hs<0\,$ in\/ {\rm(\ref{int})}.
\end{lemma}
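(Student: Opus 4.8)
The plan is a proof by contradiction. Suppose $\by$ solves (\ref{hcu}) on all of $\bbR$, is not identically $0$, yet has $\mathrm{s}\ge0$, where $\mathrm{s}$ denotes the quantity in (\ref{int}), a genuine constant along the solution by (\ref{int}). Put $\tau=\mathrm{tr}\,\by$ and define $F\colon\bbR\to(0,\infty)$ by $F=\exp\bigl(-\frac{1}{2}\int_0^t\tau(s)\,ds\bigr)$, so that $\dot F=-\frac{1}{2}\tau F$ and hence $\ddot F=-\frac{1}{4}(2\dot\tau-\tau^2)F$. Since (\ref{int}), rewritten with $\mathrm{tr}\,\dby=\dot\tau$ and $(\mathrm{tr}\,\by)^2=\tau^2$, states precisely that $2\dot\tau-\tau^2=\mathrm{s}+\mathrm{tr}\,\by^2$, one obtains
\[
4\,\ddot F+(\mathrm{s}+\mathrm{tr}\,\by^2)\,F=0 .
\]
As $F>0$, as $\mathrm{tr}\,\by^2=\sum_jy_j^2\ge0$, and as $\mathrm{s}\ge0$ by assumption, this yields $\ddot F\le0$ throughout $\bbR$.

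Thus $F$ is a positive concave function on the whole real line, and so it must be constant: a nonconstant concave function has, by monotonicity of its difference quotients, a one-sided limit equal to $-\infty$, which positivity of $F$ precludes. Consequently $\ddot F\equiv0$, whence $\mathrm{s}+\mathrm{tr}\,\by^2\equiv0$; both summands being nonnegative, this forces $\mathrm{s}=0$ and $\mathrm{tr}\,\by^2\equiv0$, i.e.\ $\by\equiv0$, contrary to our assumption. Therefore $\mathrm{s}<0$, as claimed.

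The only step in this argument that is not routine is hitting upon the substitution $F=\exp\bigl(-\frac{1}{2}\int\mathrm{tr}\,\by\bigr)$, which turns the (constant) scalar-curvature expression into the displayed linear second-order relation; after that, only the elementary principle ``positive concave $\Rightarrow$ constant'' is needed. The two points that deserve a line of care are that $\mathrm{s}$ in (\ref{int}) is legitimately a constant -- so that the inequality $\mathrm{s}\ge0$ may be invoked at every value of $t$ -- and that $\mathrm{tr}\,\by^2$ vanishes identically only when $\by$ does.
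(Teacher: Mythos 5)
Your proof is correct, and it takes a genuinely different (and somewhat slicker) route than the paper's. Both arguments ultimately exploit the same Ric\-cati-type inequality $\,2\hs\dot\tau\ge\tau^2+\mathrm{tr}\hskip1.7pt\by^2$ for $\,\tau=\mathrm{tr}\hskip1.7pt\by$, which follows from constancy of $\,\mathrm{s}\,$ in (\ref{int}) together with $\,\mathrm{s}\ge0$. The paper handles it by first showing $\,\tau\,$ is nondecreasing and nonconstant, invoking the Schwarz inequality $\,(\mathrm{tr}\hskip1.7pt\by)^2\le(n-1)\hskip1.7pt\mathrm{tr}\hskip1.7pt\by^2$ to get a uniform positive lower bound $\,\mathrm{tr}\hskip1.7pt\by^2\ge\cj^2$ on a half-line, and then deriving finite-time blow-up through the substitution $\,\alpha=2\tan^{-1}(\tau/\cj)$, whose derivative is bounded below by a positive constant while $\,\alpha\,$ itself is bounded. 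You instead linearize via $\,F=\exp\bigl(-\frac12\int_0^t\tau\bigr)$, obtaining $\,4\ddot F+(\mathrm{s}+\mathrm{tr}\hskip1.7pt\by^2)F=0\,$ (the computation checks out), so that $\,F\,$ is a positive concave function on $\,\bbR\,$ and hence constant, which forces $\,\mathrm{s}+\mathrm{tr}\hskip1.7pt\by^2\equiv0\,$ and thus $\,\by\equiv0$. What your version buys is the elimination of the Schwarz inequality, the case split on the sign of $\,\tau(t')$, and the separate argument that $\,\tau\,$ is nonconstant: the hypothesis $\,\by\not\equiv0\,$ enters only at the very last line, exactly where it should. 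The two points you flag as needing care --- constancy of $\,\mathrm{s}\,$ along solutions, and $\,\mathrm{tr}\hskip1.7pt\by^2=0\Leftrightarrow\by=0\,$ for real diagonal matrices --- are indeed the only external inputs, and both are available from (\ref{int}) and the definition of $\,\bbE$.
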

\begin{proof}Under the assumption that $\,\mathrm{s}\ge0$, (\ref{int}) gives  
$\,2\hskip1.7pt\mathrm{tr}\hskip1.7pt\dby\ge\hs\mathrm{tr}\hskip1.7pt\by^2\nh
+(\mathrm{tr}\hskip1.7pt\by)^2$ for our solution 
$\,\bbR\ni t\mapsto\by\in\bbE$, and so $\,\mathrm{tr}\hskip1.7pt\by\,$ is 
nondecreasing and nonconstant. Fixing $\,t'\nh\in\bbR\,$ such that 
$\,\mathrm{tr}\hskip1.7pt\by(t')\ne0$, we define a constant $\,\cj>0$ by 
$\,(n\hn-\nh1)\hs\cj^2\nh=[\mathrm{tr}\hskip1.7pt\by(t')]^2\nh$. Depending on 
whether $\,\mathrm{tr}\hskip1.7pt\by(t')\,$ is positive or negative, 
monotonicity of $\,\mathrm{tr}\hskip1.7pt\by\,$ gives 
$\,(\mathrm{tr}\hskip1.7pt\by)^2\nh\ge(n\hn-\nh1)\hs\cj^2$ on 
$\,[\hs t',\infty)\,$ or, respectively, on $\,(-\infty,t']$. The Schwarz 
inequality $\,(\mathrm{tr}\hskip1.7pt\bx)^2\nh
\le(n\hn-\nh1)\hskip1.7pt\mathrm{tr}\hskip1.7pt\bx^2$ now shows that 
$\,\mathrm{tr}\hskip1.7pt\by^2\nh\ge\hs\cj^2$ on $\,[\hs t',\infty)$, or on 
$\,(-\infty,t']$. The relation 
$\,2\hskip1.7pt\mathrm{tr}\hskip1.7pt\dby\ge\hs\mathrm{tr}\hskip1.7pt\by^2\nh
+(\mathrm{tr}\hskip1.7pt\by)^2$ (see above) thus yields 
$\,2\hskip1.7pt\mathrm{tr}\hskip1.7pt\dby\ge\hs\cj^2\nh
+(\mathrm{tr}\hskip1.7pt\by)^2\nh$, that is, 
$\,\hh\dot{\hn\alpha\hh}\hn\ge\hs\cj^2$ on $\,[\hs t',\infty)\,$ or 
$\,(-\infty,t']$, where 
$\,\alpha=2\tan^{-\nh1}(\mathrm{tr}\hskip1.7pt\by\nh/\nh\cj)$. Consequently, 
$\,\alpha\to\pm\infty$ as $\,t\to\pm\infty\,$ for some sign $\,\pm\hs$, 
contrary to boundedness of $\,\alpha$.
\end{proof}
\begin{remark}\label{cplor}
A Riemannian manifold $\,(I\nh\times\bbR\nh^{n\hn-\nh1}\nnh,g)\,$ arising from 
(\ref{gjk}) -- (\ref{hcu}), which makes it real\hh-an\-a\-lyt\-ic, may be 
locally isometric to a compact (and hence complete) real\hh-an\-a\-lyt\-ic 
Riemannian manifold, in the sense of the paragraph following (\ref{ana}), even 
if the solution $\,I\ni t\mapsto\by\in\bbE\,$ of (\ref{hcu}) has no extension 
to one defined on $\,\bbR$. This is illustrated by the trivial extension 
(Example~\ref{trext}), with $\hs m>0\hs$ additional zeros, of the solution 
$\,y_2\w(t)=2\hh\tan\hs 2t\,$ of Example~\ref{thsol}, for $\,n=2$, further 
modified using $\,a=1/2\,$ in Remark~\ref{group}, so as to become 
$\,t\mapsto(\tan\hs t,0,\dots,0)$. Since the latter realizes (\ref{tug}) with 
$\,g_{22}\w=\cos^2\nnh t$, it represents, locally, a product of the standard 
sphere $\,\mathrm{S}^2$ with a flat torus $\,\mathrm{T}^m\nh$.
\end{remark}

\section{Completeness}\label{co}
\setcounter{equation}{0}
Let $\,n\ge3$. In the usual fashion, (\ref{hcu}) is equivalent to the 
first-or\-der system
\begin{equation}\label{fos}
\dby\,=\,\bp\hs,\hskip14pt
\dbp\,=\,(\mathrm{tr}\hskip1.7pt\by+\by)\hs\bp\,
+\,(\mathrm{tr}\hskip1.7pt\by^2)\hh\by\hs
-\hs(\mathrm{tr}\hskip1.7pt\by)\hh\by^2.
\end{equation}
Solutions $\,t\mapsto\by\,$ of (\ref{hcu}) thus correspond to integral curves 
$\,t\mapsto(\by,\bp)\,$ of the vector field $\,v\,$ on 
$\,\bbE\hn\times\nh\bbE\,$ represented by (\ref{fos}), and expressed as
\begin{equation}\label{vyz}
(\by,\bp)\,\mapsto\,v_{(\by\nnh,\hh\bp)}\w\hs
=\,(\bp\hh,(\mathrm{tr}\hskip1.7pt\by\nh+\nh\by)\bp
+(\mathrm{tr}\hskip1.7pt\by^2)\by-(\mathrm{tr}\hskip1.7pt\by)\by^2)
\end{equation}
when identified with a mapping 
$\,\bbE\hn\times\nh\bbE\to\bbE\hn\times\nh\bbE$. This $\,v\,$ has an obvious 
curve $\,\bbR\ni q\mapsto q(\mathbf1,\mathbf0)\,$ of zeros, where 
$\,\mathbf1\in\bbE\,$ is the identity. Evaluating the differentials of 
$\,v:\bbE\hn\times\nh\bbE\to\bbE\hn\times\nh\bbE\,$ at 
$\,q(\mathbf1,\mathbf0)$, and of the function 
$\,\bbE\hn\times\nh\bbE\ni(\by,\bp)\mapsto\mathrm{s}
=2\hskip1.7pt\mathrm{tr}\hskip1.7pt\bp-\hs\mathrm{tr}\hskip1.7pt\by^2\nh
-(\mathrm{tr}\hskip1.7pt\by)^2\nh\in\bbR$, cf.\ 
(\ref{int}), at any $\,(\by,\bp)\in\bbE\hn\times\nh\bbE$, we obtain 
$\,dv\nh_{q(\mathbf1\hn,\hh\mathbf0)}\w(\hby,\hbp)=(\hbp,\,nq\hs\hbp
+q^2\hs\mathrm{tr}\hskip1.7pt\hby-(n\hn-\nh1)\hh q^2\hby)\,$ and 
$\,d\hh\mathrm{s}_{(\by\nnh,\hh\bp)}\w(\hby,\hbp)
=2\hh[\hh\mathrm{tr}\hskip1.7pt\hbp-\hs\mathrm{tr}\hskip1.7pt\by\hby
-(\mathrm{tr}\hskip1.7pt\by)\mathrm{tr}\hskip1.7pt\hby]$. When $\,q\ne0$, the 
linear en\-do\-mor\-phism 
$\,dv\nh_{q(\mathbf1\hn,\hh\mathbf0)}\w$ of $\,\bbE\hn\times\nh\bbE\,$ is 
di\-ag\-o\-nal\-iz\-able, with the eigen\-values 
$\,0,n\hh q,(n\hn-\nh1)\hh q,q\,$ of multiplicities 
$\,1,1,n\hn-\nh2,n\hn-\nh2$, the eigen\-space for each of the four 
eigen\-values $\,\lambda\,$ consisting of all $\,(\hby,\hbp)\,$ such that 
$\,\hbp=\lambda\hby\,$ and either $\,\hby$ equals a multiple of the identity 
(for $\,\lambda\in\{0,n\hh q\}$), or $\,\mathrm{tr}\hskip1.7pt\hby=0\,$ (if 
$\,\lambda\in\{(n\hn-\nh1)\hh q,q\}$).

On the other hand, $\,\mathrm{s}\,$ has no critical points in 
$\,\bbE\times\nh\bbE$, and $\,v\,$ is tangent to the level sets of 
$\,\mathrm{s}$. The latter sets are co\-di\-men\-sion-one 
real\hh-an\-a\-lyt\-ic sub\-man\-i\-folds of $\,\bbE\hn\times\nh\bbE$, and 
those among them intersecting the curve 
$\,\bbR\ni q\mapsto q(\mathbf1,\mathbf0)$ correspond, by (\ref{int}), to 
$\,\mathrm{s}=-n(n\hn-\nh1)\hh q^2\nh$, that is, to all nonpositive values of 
$\,\mathrm{s}$. If we fix $\,q\ne0$, the tangent space at 
$\,z=q(\mathbf1,\mathbf0)\,$ of the hyper\-sur\-face $\,N\hs$ given by 
$\,\mathrm{s}=-n(n\hn-\nh1)\hh q^2\nh$, equal to the kernel of 
$\,d\hh\mathrm{s}_{q(\mathbf1\hn,\hh\mathbf0)}\w$, coincides, due to 
dimensional reasons, with the span of the eigen\-spaces of 
$\,dv\nh_{q(\mathbf1\hn,\hh\mathbf0)}\w$ for the three nonzero eigen\-values 
$\,n\hh q,(n\hn-\nh1)\hh q,q$. (See the preceding paragraph and the above 
formula for $\,d\hh\mathrm{s}_{(\by\nnh,\hh\bp)}\w(\hby,\hbp)$.) From 
(\ref{pvz}) it now follows that $\,\partial w\nh_z\w$, for the vector field 
$\,w\,$ on $\,N\hs$ arising as the restriction of $\,v$, is 
di\-ag\-o\-nal\-iz\-able, with positive (or, negative) eigen\-values. 
Thus, as $\,z=q(\mathbf1,\mathbf0)$,
\begin{equation}\label{shl}
\mathrm{our\ }\,z,N\nh,w\,\mathrm{\ and\ }\,\ve=-\mathrm{sgn}\,q\,\mathrm{\ 
satisfy\ the\ hypothesis\ of\ Lemma~\ref{trapd}.}
\end{equation}
\begin{remark}\label{mltdf}Whenever $\,c\in\bbR\smallsetminus\{0\}$, the 
assignment $\,(\by,\bp)\mapsto(c\by,c^2\nh\bp)\,$ is a dif\-feo\-mor\-phism 
$\,F\hskip-3pt_c\w:\bbE\hn\times\nh\bbE\to\bbE\hn\times\nh\bbE$, sending our 
vector field $\,v\,$ to $\,v/c$, and pulling the function $\,\mathrm{s}\,$ 
back to $\,c^2\mathrm{s}$. Using our $\,N\hs$ given by 
$\,\mathrm{s}=-n(n\hn-\nh1)\hh q^2$ we obtain a dif\-feo\-mor\-phism 
$\,(0,\infty)\times\nh N\ni(c,x)\mapsto F(c,x)=F\hskip-3pt_c\w(x)\,$ onto 
the open set in $\,\bbE\hn\times\nh\bbE$ on which $\,\mathrm{s}<0$, as one 
sees defining its inverse by $\,F\hh^{-\nnh1}\nh(x')=(c,x)$, if 
$\,\mathrm{s}(x')<0$, with $\,\hs c,x\,$ such that 
$\,n(n\hn-\nh1)\hh(cq)^2\nh=-\mathrm{s}(x')\,$ and $\,x=F(1/c,x')$.
\end{remark}
In the next theorem, we fix an integer $\,n\ge3$, again denoting by $\,\bbE\,$ 
the space of all diagonal $\,(n\hn-\nh1)\times(n\hn-\nh1)\,$ matrices, and by 
$\,\mathbf1\in\bbE\,$ the identity.
\begin{theorem}\label{cplex}For any\/ $\,(\xi,\zeta)\in\bbR\times(0,\infty)$, 
every maximal solution\/ $\,t\mapsto\by\,$ of\/ {\rm(\ref{hcu})} with\/ 
$\,(\by(0),\dby(0))\,$ sufficiently close to\/ 
$\,(\xi\mathbf1,-\zeta\mathbf1)\,$ in\/ $\,\bbE\hn\times\nh\bbE\,$ has the 
domain\/ $\,\bbR$, and the metric\/ $\,g\,$ on\/ $\,\rn$ defined by\/ 
{\rm(\ref{gjk})} -- {\rm(\ref{tug})} is complete.
\end{theorem}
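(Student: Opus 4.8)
The plan is to reduce the completeness statement to the dynamical-systems machinery already assembled in Section~\ref{co}, namely the reformulation \eqref{fos}--\eqref{vyz} of \eqref{hcu} as an integral-curve problem for the vector field $\,v\,$ on $\,\bbE\times\bbE$, together with the trapping Lemma~\ref{trapd} and the observation \eqref{shl}. First I would fix $\,(\xi,\zeta)\in\bbR\times(0,\infty)$, set $\,q\,$ so that the zero $\,z=q(\mathbf1,\mathbf0)\,$ of $\,v\,$ lies on the level hypersurface $\,N\,$ through a point near $\,(\xi\mathbf1,-\zeta\mathbf1)$; concretely, by \eqref{int} the value of $\,\mathrm{s}\,$ at $\,(\xi\mathbf1,-\zeta\mathbf1)\,$ is $\,-n(n-1)\zeta\,$ up to the linear terms that vanish on multiples of $\,\mathbf1$, so one picks $\,q\,$ with $\,n(n-1)q^2\,$ matching that negative value and takes the sign of $\,q\,$ so that $\,\ve=-\mathrm{sgn}\,q\,$ is, say, $\,+1$. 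The point is that an initial condition $\,(\by(0),\dby(0))\,$ close to $\,(\xi\mathbf1,-\zeta\mathbf1)\,$ gives a negative value of $\,\mathrm{s}$, hence lies on one of the hypersurfaces $\,\mathrm{s}=-n(n-1)q^2\,$; rescaling by $\,F\hskip-3pt_c\,$ from Remark~\ref{mltdf} moves the initial point into a prescribed small neighborhood $\,U\,$ of $\,z\,$ inside $\,N$, and by Remark~\ref{mltpl} (or directly the scaling covariance in Remark~\ref{group}) this rescaling is harmless for the completeness conclusion.

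Next I would invoke \eqref{shl} and Lemma~\ref{trapd}: since $\,v\,$ is tangent to $\,N\,$ and the restricted field $\,w\,$ has $\,\ve\partial w\nh_z\,$ with negative-definite associated bilinear form, there are arbitrarily small $\,U\ni z\,$ such that every maximal integral curve of $\,w\,$ through $\,U\,$ at some time $\,t'\,$ stays in $\,U\,$ for all $\,\ve(t-t')\ge0\,$ and in particular is defined up to $\,+\infty$. That handles one half-line $\,[0,\infty)$. For the other half-line I would use the discrete symmetry of \eqref{hcu}: the substitution $\,t\mapsto -t\,$ carries solutions to solutions (this is exactly the $\,\pm t\,$ part of the $\,K$-action described before Remark~\ref{group}), and it sends $\,(\by(0),\dby(0))=(\xi\mathbf1,-\zeta\mathbf1)\,$ to $\,(\xi\mathbf1,+\zeta\mathbf1)$; the zero $\,z'=-q(\mathbf1,\mathbf0)\,$ with the opposite sign of $\,q\,$ then has $\,\ve=+1\,$ replaced by $\,\ve=-1$, so a second application of Lemma~\ref{trapd} (after the appropriate $\,F\hskip-3pt_c$-rescaling) shows the time-reversed solution extends to $\,[0,\infty)$, i.e.\ the original solution extends to $\,(-\infty,0]$. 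Combining, the maximal domain is all of $\,\bbR$. I would also note that the neighborhoods $\,U\,$ can be chosen uniformly (they only depend on $\,z$, hence on $\,(\xi,\zeta)$, not on the particular nearby solution), so ``sufficiently close'' is a genuine open condition.

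Finally I would deduce completeness of the metric $\,g\,$ on $\,\rn\,$ from the trapping estimate via Remark~\ref{compl}. The key extra input needed is the sign information $\,\mp y\nh_j(t)\ge\delta\,$ for $\,|t|\,$ large: as $\,t\to+\infty\,$ the integral curve $\,(\by(t),\dby(t))\,$ stays in the small neighborhood $\,U\,$ of $\,z=q(\mathbf1,\mathbf0)$, so $\,\by(t)\,$ is close to $\,q\mathbf1$; by \eqref{tug} the relevant quantity is $\,y\nh_j$, which up to the harmless rescaling equals the diagonal entries of $\,\by$, hence each $\,y\nh_j(t)\,$ is within $\,o(1)\,$ of $\,q$, giving $\,\mathrm{sgn}\,y\nh_j(t)=\mathrm{sgn}\,q\,$ and $\,|y\nh_j(t)|\ge|q|/2\,$ for $\,t\,$ large; similarly $\,y\nh_j(t)\,$ is near $\,-q\,$ as $\,t\to-\infty\,$ (from the time-reversed analysis at $\,z'$). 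Thus the hypothesis of Remark~\ref{compl} holds with $\,\delta=|q|/2$, and that remark gives completeness of $\,g$. The main obstacle I anticipate is bookkeeping the two rescalings (one by $\,F\hskip-3pt_c\,$ to land in $\,U$, one by $\,a\,$ in Remark~\ref{group} to pass between the metric $\,g\,$ and $\,g/a^2$) so that the sign/magnitude bounds on $\,y\nh_j\,$ transfer correctly to the unrescaled metric; since scaling only multiplies everything by fixed positive constants this is routine, but it must be stated carefully to justify the uniform $\,\delta$.
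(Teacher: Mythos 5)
Your overall architecture matches the paper's (trapping at the zeros of the vector field $v$ via Lemma~\ref{trapd} and \eqref{shl}, the rescaling $F_c$ of Remark~\ref{mltdf} together with Remark~\ref{mltpl}, and finally Remark~\ref{compl} for completeness), but there is a genuine gap at the central step. You assert that rescaling by $F_c$ ``moves the initial point into a prescribed small neighborhood $U$ of $z$ inside $N$.'' This is false. The map $F_{1/c}$ only normalizes the level set of $\mathrm{s}$: it sends $(\xi\mathbf1,-\zeta\mathbf1)$ to $(\xi\mathbf1/c,-\zeta\mathbf1/c^2)$, whose second component is a fixed nonzero multiple of $\mathbf1$ (since $\zeta>0$), so the rescaled point lies on $N$ but is in general nowhere near the zero $q(\mathbf1,\mathbf0)$. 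Lemma~\ref{trapd} says nothing about trajectories that merely start somewhere on $N$; it requires the trajectory to actually pass through the small neighborhood $U$ of the zero, and nothing in your argument shows that the solution with initial data $(\xi\mathbf1,-\zeta\mathbf1)$ ever gets there. The same objection applies to your time-reversed argument at $z'=-q(\mathbf1,\mathbf0)$.

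The missing idea is the explicit heteroclinic orbit: the paper takes the solution $\by_{a,b}(t)=a\,\by_{1,0}(at+b)$ built from $\by_{1,0}(t)=-2\tanh nt$ of Example~\ref{thsol}, chooses $a,b$ so that it realizes exactly the initial data $(\xi\mathbf1,-\zeta\mathbf1)$ at $t=0$, and observes that this particular solution converges to the two zeros $z_\pm=\mp2|a|(\mathbf1,\mathbf0)$ as $t\to\pm\infty$. Hence there are finite times $t'_\pm$ at which it lies in the trapping neighborhoods $U_\pm$, and continuous dependence of the flow on initial conditions over the compact interval $[\,t'_-,t'_+]$ shows that every nearby solution also reaches $U_\pm$ at times $t'_\pm$ and is therefore trapped, giving domain $\bbR$ and the sign bounds \eqref{ynz} needed for Remark~\ref{compl}. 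Without this model solution (or some substitute argument showing the trajectory enters the basins of the two zeros), your proof does not go through; with it, your remaining bookkeeping about $F_c$, Remark~\ref{mltpl}, and the sign of $y_j$ for $|t|$ large is essentially the paper's.
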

\begin{proof}The solution 
$\,\bbR\ni t\mapsto\by\nnh_{1,0}\w(t)=-\nh2\hh\tanh\hs nt\,$ (times the 
identity $\,\mathbf1$) of Example~\ref{thsol} leads, via Remark~\ref{group}, 
to further solutions $\,t\mapsto \by\nnh_{a,b}\w(t)=a\by\nnh_{1,0}\w(at+b)$, 
where $\,a,b\in\bbR\,$ and $\,a\ne0$. Suitably chosen and fixed such $\,a,b\,$ 
clearly realize, at $\,t=0$, any prescribed initial data 
$\,(\xi\mathbf1,-\zeta\mathbf1)=(\by\nnh_{a,b}\w(0),\dby\nnh_{a,b}\w(0))
\in\bbR\times(0,\infty)$. Setting 
$\,x\nh_{a,b}\w(t)=(\by\nnh_{a,b}\w(t),\dby\nnh_{a,b}\w(t))\,$ and 
$\,z_\pm=\mp2|a|(\mathbf1,\mathbf0)\,$ we get $\,x\nh_{a,b}\w(t)\to z_\pm$ as 
$\,t\to\pm\infty$. In the discussion preceding (\ref{shl}), applied to 
$\,q=\mp2|a|$, both choices of the sign $\,\pm\,$ lead to the same $\,N\nh$, 
given by $\,\mathrm{s}=-n(n\hn-\nh1)\hh q^2\nh$, and the same $\,w$, while 
$\,z_+\w,z_-\w\in N\hs$ are two different zeros of $\,w$. Using (\ref{shl}) 
we now choose neighborhoods $\,\,U\hskip-2.3pt_\pm\w$ of $\,z_\pm\w$ in 
$\,N\hs$ satisfying the assertion of Lemma~\ref{trapd} for $\,x(t)\,$ equal to 
our $\,x\nh_{a,b}\w(t)$, and $\,t'_\pm\in\bbR\,$ with 
$\,x\nh_{a,b}\w(t'_\pm)\in\,U\hskip-2.3pt_\pm\w$. Since 
$\,z_\pm=\mp2|a|(\mathbf1,\mathbf0)$, we may also require that
\begin{equation}\label{ynz}
\mp y\nh_j\w>|a|\,\mathrm{\ whenever\ 
}\,(\nh y_2\w,\dots,y_n\w,p_2\w,\dots,p_n\w)\in\,U\hskip-2.3pt_\pm\w\mathrm{\ 
and\ }\,j\in\{2,\dots,n\}\hh.
\end{equation}
By continuity, $\,x(t'_\pm)\in\,U\hskip-2.3pt_\pm\w$ for some neighborhood 
$\,\,U\hskip-1.7pt_0\w$ of $\,x\nh_{a,b}\w(0)\,$ in $\,N\hs$ and all integral 
curves $\,t\mapsto x(t)\in N\hs$ of $\,w\,$ with 
$\,x(0)\in\,U\hskip-1.7pt_0\w$. The image of 
$\,(0,\infty)\times U\hskip-1.7pt_0\w$ under the dif\-feo\-mor\-phism $\,F\hs$ 
of Remark~\ref{mltdf} is now a neighborhood of 
$\,x\nh_{a,b}\w(0)=(\xi\mathbf1,-\zeta\mathbf1)$ in $\,\bbE\hn\times\nh\bbE$, 
the existence of which constitutes our assertion: according to 
Remark~\ref{mltdf}, this $\,F\nh$-im\-age equals the union of 
$\,F\hskip-3pt_c\w(U\hskip-1.7pt_0\w)\,$ over $\,c>0$, and each 
$\,F\hskip-3pt_c\w$ maps $\,N\hs$ dif\-feo\-mor\-phic\-al\-ly onto the 
$\,\mathrm{s}\hh$-pre\-im\-age of the value $\,-n(n\hn-\nh1)\hh(cq)^2\nh$, 
while the push-for\-ward, under 
$\,F\hskip-3pt_c\w:N\nh\to F\hskip-3pt_c\w(N)$, of $\,w\,$ obtained by 
restricting $\,v\,$ to $\,N\nh$, is the restriction of $\,v/c\,$ to 
$\,F\hskip-3pt_c\w(N)$. However, the discussion preceding (\ref{shl}), and 
(\ref{shl}) itself, apply to every $\,q\ne0$, and the use of $\,v/c\,$ rather 
than $\,v\,$ makes no difference (Remark~\ref{mltpl}). Now (\ref{ynz}) 
combined with Remark~\ref{compl} yields completeness of $\,g$.
\end{proof}
Our next result shows that the examples arising from Lemma~\ref{xmple}(e) are 
not generally Ric\-ci-par\-al\-lel, or locally reducible, or (when $\,n\ge4$) 
con\-for\-mal\-ly flat.
\begin{theorem}\label{cpopn}The lo\-cal-isom\-e\-try types of Riemannian\/ 
$\,n$-man\-i\-folds satisfying\/ {\rm(\ref{dvr})} -- {\rm(\ref{gnr})} form a 
set with a nonempty interior in the\/ $\,(2n-3)$-di\-men\-sion\-al moduli 
space of Remark\/~{\rm\ref{modsp}}.
\end{theorem}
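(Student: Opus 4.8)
The plan is to transport the whole problem to the ordinary differential equation (\ref{hcu}) through Theorem~\ref{lcstr}, use Theorem~\ref{cplex} to secure completeness near an explicit reference solution, and then show that the remaining requirements of (\ref{gnr}) fail only on a ``thin'' (proper real-analytic, hence nowhere dense) part of the space of initial data.

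First I would fix $\xi\in\bbR$, $\zeta>0$ and choose a connected open neighborhood $W\subseteq\bbE\times\bbE$ of $(\xi\mathbf1,-\zeta\mathbf1)$ so small that Theorem~\ref{cplex} applies to it: every maximal solution $t\mapsto\by$ of (\ref{hcu}) with $(\by(0),\dby(0))\in W$ is defined on all of $\bbR$ and produces, via (\ref{gjk}) -- (\ref{tug}), a complete metric on $\rn$. By Theorem~\ref{lcstr} and Remark~\ref{modsp}, sending a maximal Ricci-generic solution to its $K$-equivalence class identifies the local-isometry types of $n$-manifolds satisfying (\ref{dvr}) -- (\ref{gno}) with the points of the $(2n-3)$-dimensional moduli space, and the passage from a Ricci-generic initial value $(\by(0),\dby(0))$ to the point of the moduli space it represents is an open map (the moduli space being, near the relevant locus, a quotient of an open set of such data by the $K$-action). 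It is therefore enough to produce a nonempty open $W'\subseteq W$ of Ricci-generic initial data such that, for every $(\by(0),\dby(0))\in W'$, the resulting metric is moreover locally irreducible, not Ricci-parallel and, when $n\ge4$, not conformally flat: the image of $W'$ is then the required nonempty open subset of the moduli space, because all of these properties, together with admitting a complete representative, are invariant under local isometry and under the $K$-action.

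Inside $W$ let $B_0,B_1,B_2,B_3$ be the sets of initial data whose maximal solution yields, respectively, a non-Ricci-generic, a locally reducible, a conformally flat (take $B_2=\emptyset$ if $n=3$) or a Ricci-parallel metric. The heart of the argument is that each $B_i$ is a \emph{proper real-analytic} subset of the connected open set $W$. Real-analyticity: since (\ref{hcu}) is an analytic equation, the metric depends analytically on $(\by(0),\dby(0))$ and on $t$, so each of the four properties, once it holds on a nonempty open set, holds everywhere by analytic continuation; and by the Ambrose--Singer description of the holonomy algebra (reducibility), by vanishing of the Weyl tensor (conformal flatness), by $t$-constancy of all the Ricci eigenvalues $\mu_1,\dots,\mu_n$ of Lemma~\ref{xmple}(a) (Ricci-parallelism -- the proof of Lemma~\ref{xmple}(e) shows that the entries of $\nabla\mathrm{r}$ not already annihilated by (\ref{hcu}) are, up to normalization, $\dot\mu_1,\dot\mu_j$ and $y_j(\mu_j-\mu_1)$, so $\nabla\mathrm{r}=0$ amounts to all $\mu_k$ being constant in $t$), and by distinctness of the $\mu_k$ somewhere (Ricci-genericity), each property becomes a system of analytic equations on $(\by(t),\dby(t))$ for all $t$; differentiating at $t=0$ and using the equation to eliminate higher derivatives turns it into an analytic condition on $(\by(0),\dby(0))$. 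Properness: $B_0\neq W$ by Remark~\ref{modsp} (Ricci-generic solutions occur with data arbitrarily near $(\xi\mathbf1,-\zeta\mathbf1)$); $B_1\neq W$ and $B_3\neq W$ are both witnessed by the single reference solution $t\mapsto-2a\tanh\!\big(n(at+b)\big)\mathbf1$ of Example~\ref{thsol} and Remark~\ref{group}, chosen (for suitable $a,b$) to have data at $t=0$ inside $W$: there $\mu_j=\dot y_j-y_j\,\mathrm{tr}\,\by$ is nonconstant in $t$ for $n\ge3$ (so the metric is not Ricci-parallel), and the metric has the form $dt^2+h^2\delta$ with $h''\not\equiv0$, whose holonomy acts irreducibly; finally $B_2\neq W$ for $n\ge4$ because the curvature operator of (\ref{gjk}) is diagonal in the obvious orthonormal frame, with sectional curvatures $K_{1j}=\dot y_j-y_j^2$ and $K_{jk}=-y_jy_k$, so $W=0$ would require the solvability of $K_{ij}=L_i+L_j$ in unknowns $L_1,\dots,L_n$, and this fails on a nonempty open subset of $W$ -- indeed, for $n\ge5$ it already fails wherever the $y_j$ are mutually distinct, which is automatic under Ricci-genericity.

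Since each $B_i$ is closed with empty interior, the finite union $B_0\cup B_1\cup B_2\cup B_3$ is nowhere dense in $W$, so $W'=W\smallsetminus(B_0\cup B_1\cup B_2\cup B_3)$ is open and dense, in particular nonempty, and its image in the moduli space is the open set whose points are the asserted local-isometry types. The step I expect to be the main obstacle is the properness of $B_2$: one has to extract from the explicit curvature of (\ref{gjk}) that conformal flatness collapses the anisotropy of the metric, whereas $B_0,B_1,B_3$ are disposed of by a single explicit solution and by Remark~\ref{modsp}.
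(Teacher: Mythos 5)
Your overall strategy coincides with the paper's: reduce to the ODE (\ref{hcu}) via Theorem~\ref{lcstr}, obtain completeness on a neighborhood of $(\xi\mathbf1,-\zeta\mathbf1)$ from Theorem~\ref{cplex}, and then discard the initial data violating the remaining clauses of (\ref{gnr}). The gap is in your treatment of local irreducibility. You claim that the set $B_1$ of initial data producing locally reducible metrics is a \emph{real-analytic} subset of $W$, citing the Ambrose--Singer description of the holonomy algebra; but reducibility of the holonomy representation is an existential condition -- ``there exists a nontrivial subspace invariant under all the operators $\nabla^kR(\,\cdot\,,\cdot\,)$'' -- that is, the image under the proper projection $W\times\mathrm{Gr}(d,n)\to W$ of an analytic set. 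In the real-analytic category such images are in general only subanalytic, not analytic, and a proper closed subanalytic subset of a connected open set need not be nowhere dense (a closed half-space is one). So the engine of your argument, ``proper analytic subset $\Rightarrow$ nowhere dense,'' is not available for $B_1$ as you have set it up. The paper avoids this entirely: for a Ricci-generic solution, local reducibility would force each coordinate field $\partial_k$ to be tangent to one of the two parallel factors, whence $\vg_{jj}^{1}=y_jg_{jj}=0$, i.e.\ $y_j\equiv0$, for some $j\ge2$; thus $B_1\smallsetminus B_0$ lies in the finite union of hyperplanes $\{\hs y_j(0)=0\hs\}$, and the explicit open condition ``all $y_j(0)\ne0$,'' together with Ricci-genericity, certifies irreducibility directly. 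You need to replace the holonomy argument by this (or some other proof that $B_1\smallsetminus B_0$ has empty interior).

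Two smaller points. Your characterization of Ricci-parallelism as ``all $\mu_k$ constant in $t$'' omits the components $g^{jj}R_{j1,\hs j}=y_j(\mu_j-\mu_1)$, which must also vanish; the paper uses precisely these components -- nonzero once $y_j(0)\ne0$ and the $\mu_k(0)$ are distinct -- so the same open condition disposes of Ricci-parallelism with no need for the $\tanh$ witness (whose irreducibility you also assert without proof). For conformal flatness, the reduction to solvability of $K_{ij}=L_i+L_j$ is the right idea, but you only carry out the properness of $B_2$ for $n\ge5$; for $n=4$ one must bring in the compatibility conditions involving $K_{1j}=\dot y_j-y_j^2$, and the paper handles all $n\ge4$ uniformly by computing $g^{jj}g^{kk}W_{jkjk}$ explicitly and observing that $\dot y_j$ enters with coefficient $3-n\ne0$, so that nonvanishing of a Weyl component at $t=0$ is an explicitly attainable open condition on $\dby(0)$. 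With these repairs your proof goes through and is essentially the paper's, phrased in terms of nowhere-dense exceptional loci rather than explicit pointwise open conditions on the initial data.
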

\begin{proof}
According to Theorem~\ref{lcstr}, the lo\-cal-i\-som\-e\-try types of 
all $\,n$-di\-men\-sion\-al $\,(M\nh,g)\,$ with (\ref{dvr}) -- (\ref{gno}) 
arise from (\ref{gjk}) when one chooses a maximal Ric\-ci-ge\-ner\-ic solution 
$\,I\ni t\mapsto\by\in\bbE\,$ of (\ref{hcu}), and then fixes a smooth curve 
$\,I\ni t\mapsto(g_{22}\w(t),\dots,g_{nn}\w(t))\in(0,\infty)^{n\hn-\nh1}\,$ 
satisfying (\ref{tug}). Restricting our discussion to the case where 
$\,0\in I\nh$, and then pa\-ram\-e\-triz\-ing such solutions (allowed, this 
time, not to be Ric\-ci-ge\-ner\-ic) by their initial data at $\,t=0$, we 
identify them with points of a specific Euclidean space, and completeness of 
$\,g\,$ is guaranteed by Theorem~\ref{cplex} once one assumes (as we do from 
now on) that the initial data range over a certain nonempty open subset of 
the latter space. Now, as in Remark~\ref{modsp}, if $\,n\ge3$, we can make the 
Ric\-ci eigen\-val\-ue functions $\,\mu_1\w(0),\dots,\mu_n\w(0)\,$ of 
Lemma~\ref{xmple} mutually distinct (which leads to 
Ric\-ci-ge\-ner\-ic\-i\-ty) just by ensuring that 
$\,(\mu_2\w(0),\dots,\mu_n\w(0))$ does not lie within a specific finite 
union of hyper\-planes in $\,\bbE$. However, rather than using any prescribed 
$\,\by(0)$, cf. Remark~\ref{modsp}, let us require 
$\,y_1\w(0),\dots,y_n\w(0)\,$ to be all nonzero. 
This amounts to imposing on the solution $\,t\mapsto\by\,$ of (\ref{hcu}) a 
{\it further open condition\/} implying (see the proof of 
Lemma~\ref{xmple}) that $\,R\hn_{j1,\hs j}\w(0)\ne0$, and so $\,g$ is not 
Ric\-ci-par\-al\-lel. In the proof of Lemma~\ref{xmple} we also saw that 
$\,\vg_{\hskip-2.7ptj\hn j}^{\hs1}(0)\ne0$ and, consequently, $\,g\,$ cannot 
be locally reducible. (If it were, the Ric\-ci eigen\-vec\-tor fields 
$\,\partial\nh_1\w,\dots,\partial\nh_n\w$ of Lemma~\ref{xmple}, with distinct 
eigen\-val\-ue functions $\,\mu_1\w,\dots,\mu_n\w$, would each be tangent to 
one or the other parallel factor distribution, giving 
$\vg_{\hskip-2.7ptj\hn j}^{\hs1}\nh=0\hs$ with some $\,j=2,\dots,n$.) 
For $\,k\ne j$, one easily verifies that 
$\,g^{j\hn j}g^{kk}\nnh R\hn_{jkjk}\w=-y\nh_j\w y\nh_k\w$. Therefore, if 
$\,W\nnh$ denotes the Weyl tensor, 
$\,(n\hn-\nh1)(n\hn-\nh2)g^{j\hn j}g^{kk}W\hskip-3.4pt_{jkjk}\w\hs
=\,\,2\hskip1.7pt\mathrm{tr}\hskip1.7pt\dby\,-\,\mathrm{tr}\hskip1.7pt\by^2\hs
-\,(\mathrm{tr}\hskip1.7pt\by)^2
+\,(n\hn-\nh1)[(y\nh_j\w+y\nh_k\w)\hs\mathrm{tr}\hskip1.7pt\by
-(n\hn-\nh2)y\nh_j\w y\nh_k\w-\dot y\nh_j\w-\dot y\nh_k\w]$, where 
$\,\dot y\nh_j\w$ appears with the coefficient $\,3-n$. An enhanced version 
of the last open condition thus precludes con\-for\-mal flat\-ness of our 
examples when $\,n\ge4$.
\end{proof}

\setcounter{section}{1}
\renewcommand{\thesection}{\Alph{section}}
\setcounter{theorem}{0}
\renewcommand{\thetheorem}{\thesection.\arabic{theorem}}
\section*{Appendix: Warped products with harmonic curvature}\label{wp}
\setcounter{equation}{0}
For the reader's convenience, we gather here some facts that are well known 
\cite{kim-cho-hwang} and easily verified. The repeated indices are always 
summed over. In (\ref{war}) we set $\,m=\dim\bM\,$ and $\,\p=\dim\varSigma$, 
assuming that $\,m\p\ge1\,$ and $\,\phi:\bM\to(0,\infty)\,$ is nonconstant. 
Thus, $\,\dim M=n\,$ with $\,n=m+\p\,\ge\hs2$. We use product coordinates 
$\,x^\lambda$ in $\,M\nh$, consisting of local coordinates $\,x^i$ for 
$\,\bM\,$ and $\,x^a$ for $\,\varSigma$, declaring
\begin{equation}\label{rin}
\lambda,\mu,\nu\in\{1,\dots,n\}\hh,\hskip5pti,j,k
\in\{1,\dots,m\}\hh,\hskip5pta,b,c\in\{m+1,\dots,n\}
\end{equation}
to be our index ranges. Therefore, $\,\bg_{i\hn j}\w$ as well as 
$\,\theta\nh=\nh\log\phi\,$ depend only on the variables 
$\,x^k\nh$,\and $\,\h_{ab}\w$ only on $\,x^c\nh$, that is,
%\begin{equation}\label{gij}
$\,\partial\hn_a\w\hs\bg\hn_{i\hn j}\w\hs=\,\partial\hn_a\w\theta\hs
=\,\partial_i\w \h_{ab}\w\hs=\,0$. Furthermore,
\begin{equation}\label{gij}
g_{i\hn j}\w\nh=\bg_{i\hn j}\w\hh,\hskip6ptg_{ia}\w\nh=g_{ai}\w\hs
=\,0\hh,\hskip6ptg_{ab}\w\nh=e^{2\theta}\nh \h_{ab}\w\hh.
\end{equation}
For the Chris\-tof\-fel symbols 
$\,\vg_{\hskip-2.7pt\lambda\mu}^{\hs\nu},\,\bvg_{\hskip-2.7ptij}^{\hs k},
\,H\nnh_{ab}^{\hs c}$ of $\,g,\bg,\eta$, their Ric\-ci-ten\-sor components 
$\,R_{\lambda\mu}\w,\,\br_{i\hn j}\w,\,P\hskip-2.7pt_{ab}\w$, and the 
components 
$\,\bna_{\hskip-2.7pti}\w\hskip-2.2pt\bna_{\hskip-2.7ptj}\w\nh\theta\,$ of 
the $\,\bg\hh$-Hess\-i\-an of $\,\theta$, one has
\[
\begin{array}{l}
g^{\hs ij}\nnh=\bg^{\hh ij},\hskip5ptg^{\hs ia}\nnh=g^{\hs ai}\nh
=0\hh,\hskip5ptg^{\hs ab}\nnh=e^{-\nh2\theta}\nh \h^{ab},\hskip5pt
\vg_{\hskip-2.1ptij}^{\hs k}\nh
=\bvg_{\hskip-3.7ptij}^{\hs k}\hh,\hskip5pt
\vg_{\hskip-2.1ptia}^{\hs k}\nh=\vg_{\hskip-2.1ptij}^{\hs a}\nh=0\hh,\hskip5pt
\vg_{\hskip-2.1ptia}^{\hs b}\nh=\delta_{\nh a}^b\theta\nnh_{,\hh i}\w\hh,\\
\vg_{\hskip-3.2ptab}^{\hs i}
=-e^{2\theta}\nnh\h_{ab}\w\hs\theta^{,i},\hskip8pt
\vg_{\hskip-2.7ptab}^{\hs c}=H\nnh_{ab}^{\hs c}\hh,\hskip8pt
R_{i\hn j}\w\nnh=\nh\br_{i\hn j}\w\nh
-\p\hh[\hs\bna_{\hskip-2.7pti}\w\hskip-2.2pt\bna_{\hskip-2.7ptj}\w\nh\theta
+\nh\theta\nnh_{,\hh i}\w\hh\theta\nnh_{,\hh j}\w]\hh,
\end{array}
\]
while, in terms of the $\,\bg$-La\-plac\-i\-an 
$\,\hs\overline{\nh\Delta\nh}\hh$,
\begin{equation}\label{ria}
R_{ia}\w
=\,0\hh,\hskip16ptR_{ab}\w=\,P\hskip-2.7pt_{ab}\w\nnh
-\nh \p^{-\nh1}\nh e^{(2-\p)\theta}
[\hs\overline{\nh\Delta\hh}e^{\p\theta}]\hh\h_{ab}\w\hh.
\end{equation}
The components 
$\,R\nh_{\lambda\mu\hn,\hs\nu}\w,
\bna_{\hskip-2.7pti}\w\hskip-2.2pt\br\nh_{jk}\w,
\hs D\nnh_c\w P\hskip-2.7pt_{ab}\w$ of the covariant derivatives of the 
Ric\-ci tensors of $\,g,\bg,\eta\,$ satisfy, with the usual conventions 
$\,\theta\nnh_{,\hh i}\w\nh=\partial\nh_i\w\theta\,$ and 
$\,\theta\hh^{,\hh i}\nh=\bg^{i\hn j}\partial\nh_j\w\theta$, the relations
\begin{equation}\label{bnr}
\begin{array}{l}
R\nh_{j\hn k,\hs i}\w=\bna_{\hskip-2.7pti}\w\hskip-2.2pt\br\nh_{j\hn k}\w\nh
-\p\hh[\bna_{\hskip-2.7pti}\w\hskip-2.2pt\bna_{\hskip-2.7ptj}\w\hskip-2.2pt
\bna_{\hskip-2.7ptk}\w\nh\theta+\,\bna_{\hskip-2.7pti}\w\hskip-3pt
(\nh\theta\nnh_{,\hh j}\w\hh\theta\nnh_{,\hh k}\w)]\hh,
\hskip28ptR_{i\hn j,\hs a}\w\nh=R_{a\hn j,\hs i}\w\nh=0\hh,\\
R_{ib,\hs a}\w\nh
=e^{2\theta}\hn(\p^{-\nh1}\nh e^{-\p\theta}\nh
[\hs\overline{\nh\Delta\hh}e^{\p\theta}]\hs\theta\nnh_{,\hh i}\w\nh
+\nh[\br_{i\hn j}\w\nh
-\p\bna_{\hskip-2.7pti}\w\hskip-2.2pt\bna_{\hskip-2.7ptj}\w\nh\theta
-p\hskip.85pt\theta\nnh_{,\hh i}\w\hh\theta\nnh_{,\hh j}\w]\hs\theta\hh^{,\hs j})\hh 
\h_{ab}\w\nnh-\nh\theta\nnh_{,\hh i}\w\hh P\hskip-2.7pt_{ab}\w\hh,\\
R_{ab,\hs i}\w\nh
=-\nh \p^{-\nh1}\nh e^{2\theta}\nh
(e^{-\p\theta}\overline{\nh\Delta\hh}e^{\p\theta})\nnh_{,\hh i}\w\hh\h_{ab}\w\nnh
-\nh2\hh\theta\nnh_{,\hh i}\w\hh P\hskip-2.7pt_{ab}\w\hh,\hskip16pt
R_{ab,\hs c}\w=D\nnh_c\w P\hskip-2.7pt_{ab}\w\hh.
\end{array}
\end{equation}
Let (a) -- (e) refer to parts of Lemma~\ref{warhc}, which we now proceed 
to prove. First,
\begin{enumerate}
  \def\theenumi{{\rm\roman{enumi}}}
\item [(f)] $R_{ab,\hs i}\w\nh=R_{ib,\hs a}\w\,\hs$ for all $\,\,i,a,b\,\,$ as 
in (\ref{rin}) if and only if one has (a) and (e).
\end{enumerate}
In fact, it suffices to verify (f) on the dense set 
$\,(U\hskip-1pt\cup U')\times\hn\varSigma\,\hs\subseteq\hs M\nh$, for the 
interior $\,\,U$ of the zero set of $\,d\hh\theta\,$ in $\,\bM\,$ and the 
subset $\,\,U'$ on which $\,d\hh\theta\ne0$. On $\,\,U\nnh$, according to 
(\ref{bnr}), $\,R_{ab,\hs i}\w\nh=0=R_{ib,\hs a}\w$ since 
$\,\overline{\nh\Delta\hh}e^{\p\theta}\nh=0$. Similarly, on $\,\,U'\nh$, 
the equality $\,R_{ab,\hs i}\w\nh=R_{ib,\hs a}\w$ amounts, by (\ref{bnr}), to 
the condition $\,P\hskip-2.7pt_{ab}\w\nh=\kappa\hh\h_{ab}\w$, for a function 
$\,\kappa\,$ on $\,\varSigma$ which must be constant, as it depends only on 
the variables $\,x^{\hs j}$ that are local coordinates in $\,\bM\nh$. Formulae 
(\ref{bnr}) also show that $\,\kappa\,$ is characterized by the relation  
$\,-\kappa\hh e^{-\nh2\theta}d\hh\theta
=\p^{-\nh1}(\hn d\hh[e^{-\p\theta}\overline{\nh\Delta\hh}e^{\p\theta}]+
e^{-\p\theta}\hn[\hs\overline{\nh\Delta\hh}e^{\p\theta}]\,d\hh\theta)
+\overline{\mathrm{r}}(\bna\nh\theta,\,\cdot\,)
-\p\hh\bg(\bna\nh\theta,\nnh\bna\nh\theta)\hh d\hh\theta
-\p\hs d[\bg(\bna\nh\theta,\nnh\bna\nh\theta)]/2\,$ which, rewritten in terms 
of $\,\phi=e^\theta\nh$, becomes (e).

The equivalence of (e) and (c) is in turn obvious from (\ref{rcd}). Next, by 
(\ref{bnr}),
\begin{enumerate}
  \def\theenumi{{\rm\roman{enumi}}}
\item [(g)] $R\nh_{j\hn k,\hs i}\w\nh
=R\nh_{ik,\hs j}\w\,\hs$ for all $\,\,i,j,k\,\,$ 
with (\ref{rin}) if and only if (b) holds,
\end{enumerate}
since $\,\phi^{-\nnh1}\bna\nh d\phi=\bna\nh d\theta
+d\theta\otimes\hs d\theta$. The main claim of Lemma~\ref{warhc} is thus 
immediate: harmonicity of the curvature amounts to the Co\-daz\-zi equation 
for the Ric\-ci tensor, cf.\ (\ref{cod}.i), while (\ref{bnr}) clearly reduces 
the latter to the cases (f) -- (g).

Finally, (d) follows from (a) and (\ref{ria}).

%    Bibliographies can be prepared with BibTeX using amsplain,
%    amsalpha, or (for "historical" overviews) natbib style.
\bibliographystyle{amsplain}
%    Insert the bibliography data here.

\end{document}